\def\maketag@@@#1{\hbox{\m@th\normalfont\normalsize#1}}
\newtheorem{theorem}{Theorem}
\newtheorem{proposition}[theorem]{Proposition}
\newtheorem{lemma}[theorem]{Lemma}
\newtheorem{corollary}[theorem]{Corollary}
\newtheorem{definition}[theorem]{Definition}
\newtheorem{remark}[theorem]{Remark}
\newtheorem{example}[theorem]{Example}
\newtheorem{question}[theorem]{Question}
\newcommand{\RR}{\mathbb{R}}
\date{}
\title{\textbf{Moment Identifiability of Homoscedastic \\ Gaussian Mixtures}}
\author{Daniele Agostini, Carlos Am\'endola\textsuperscript{\Letter} and Kristian Ranestad}
\begin{document}
	\maketitle
	
\begin{abstract}
We consider the problem of identifying a mixture of Gaussian distributions with same unknown covariance matrix by their sequence of moments up to certain order. Our approach rests on studying the moment varieties obtained by taking special secants to the Gaussian moment varieties, defined by their natural polynomial parametrization in terms of the model parameters. When the order of the moments is at most three, we prove an analogue of the Alexander-Hirschowitz theorem classifying all cases of homoscedastic Gaussian mixtures that produce defective moment varieties. As a consequence, identifiability is determined when the number of mixed distributions is smaller than the dimension of the space. In the two component setting we provide a closed form solution for parameter recovery based on moments up to order four, while in the one dimensional case we interpret the rank estimation problem in terms of secant varieties of rational normal curves.
\end{abstract}

\textbf{AMS Subject Classifications}: 62R01; 62F10; 13P25; 14N07; 14Q15
	
\section{Introduction}
	
In the context of algebraic statistics \cite{sullivant2018algebraic}, moments of probability distributions have recently been explored from an algebraic and geometric point of view \cite{discgauss,momvar,polytopes,localdirac}. The key point for this connection is that in many cases the sets of moments define algebraic varieties, hence called \textit{moment varieties}. In the case of moments of mixture distributions, there is a natural correspondence to secant varieties of the moment varieties. Studying geometric invariants such as their dimension reveals properties such as model identifiability. One of the main applications for statistical inference is in the context of the method of moments, which matches the distribution's moments to moment estimates obtained from a sample. 
	
Gaussian mixtures are a prominent statistical model with multiple applications (see \cite{Cthesis} and references therein). They are probability distributions on $\mathbb{R}^n$ with a density that is a convex combination of Gaussian densities:

\begin{equation}\label{eq:densitymixtureG}
\lambda_1f_{\mathcal{N}{(\mu_1,\Sigma_1)}}(x) + \dots + \lambda_k f_{\mathcal{N}{(\mu_k,\Sigma_k)}}(x)
\end{equation} 
where $\mu_1,\dots,\mu_k\in \mathbb{R}^n$ are the $k$ means,  $\Sigma_1,\dots,\Sigma_k \in \operatorname{Sym}^2(\mathbb{R}^n)$ are the covariance matrices, and the $0\leq \lambda_i \leq 1$ with $\lambda_1+\dots+\lambda_k=1$ are the mixture weights. 
	
The starting point is thus the Gaussian moment variety $\mathcal{G}_{n,d}$, as introduced in \cite{momvar}, whose points are the vectors of all moments of order at most $d$ of an $n$-dimensional Gaussian distribution. The moments corresponding to the mixture density \eqref{eq:densitymixtureG} form the secant variety $\operatorname{Sec}_k(\mathcal{G}_{n,d})$, and identifiability in this general setting was the focus of \cite{algident}.
	
In this work we study special families of Gaussian mixtures, called homoscedastic mixtures, where all the Gaussian components share the same covariance matrix. In other words, a \textit{homoscedastic} Gaussian mixture has a density of the form
\begin{equation}
\sum_{i=1}^k \lambda_i f_{\mathcal{N}{(\mu_i,\Sigma)}}(x)
\end{equation}
where the Gaussian probability densities $f_{\mathcal{N}_(\mu_i,\Sigma)}(x)$ have all different means $\mu_i$ and \emph{same} covariance matrix $\Sigma$. The moments, up to order $d$, of homoscedastic Gaussian mixtures are still polynomials in the parameters (the means and the covariance matrix), and form the moment variety $\operatorname{Sec}^H_k(\mathcal{G}_{n,d})$. This is a set of special $k$-secants inside the secant variety $\operatorname{Sec}_k(\mathcal{G}_{n,d})$. 
	
The main question we are concerned with is: when can a general homoscedastic $k$-mixture of $n$-dimensional Gaussians be identified by its moments of order $d$? More precisely, denote by $\Theta^H_{n,k}$ the parameter space of means, covariances and mixture weights for homoscedatic mixtures, and the moment map by 
\begin{equation}\label{eq:map}
   M_{n,k,d}:\Theta^H_{n,k}\to \operatorname{Sec}^H_k(\mathcal{G}_{n,d}).
\end{equation}
The mixture parameters of a point on the moment variety $\operatorname{Sec}^H_k(\mathcal{G}_{n,d})$ can be uniquely recovered if the fiber of the moment map \eqref{eq:map} is a singleton up to natural permutations of the parameters. If this happens for a general point on the moment variety, we say that the mixture is \textit{rationally identifiable} from its moments up to order $d$. If the fiber of a general point is finite, we say that we have \textit{algebraic identifiability}. The parameters are \textit{not identifiable} if the general fiber of the moment map has positive dimension.

If the dimension of the parameter space is larger than the dimension of the space of moments, then one may expect any moment to lie on the moment variety. Clearly the fiber of the moment map must have positive dimension and we cannot have identifiability. We therefore distinguish the unexpected cases: when the dimension of the moment variety is less than the dimension of both the parameter space and the moment space, then we say that the moment variety  $\operatorname{Sec}^H_k(\mathcal{G}_{n,d})$ is \textit{defective}. In particular, defectivity implies non-identifiability.

We illustrate with an example:
	
\begin{example}\label{ex:main}
Let $n=2$, $k=2$ and $d=3$. That is, we 
consider moments up to order three for the homoscedastic mixture
of two Gaussians in $\RR^2$. The Gaussian moment variety $\mathcal{G}_{2,3}$ is 5-dimensional with 2 parameters for the mean vector and 3 for the symmetric covariance matrix. The parameters for the homoscedastic mixture are two mean vectors $\mu_1= \begin{pmatrix} \mu_{11} \\ \mu_{12} \\ \end{pmatrix}$ and $\mu_2= \begin{pmatrix} \mu_{21} \\ \mu_{22} \\ \end{pmatrix}$, the common covariance $\Sigma = \begin{pmatrix} \sigma_{11} &\sigma_{12}\\ \sigma_{12} &\sigma_{22}\\ \end{pmatrix}$ and the mixture weight $\lambda$ of the first component, in total $2 \times 2 + 3 + 1 = 8$ parameters. On the other hand, there are 9 bivariate moments up to order 3. Explicitly, the map is:
		$$
		\begin{matrix}
		m_{10} & = &  \lambda \mu_{11} +(1-\lambda) \mu_{21} \\
		m_{01} & = &  \lambda \mu_{12} +(1-\lambda) \mu_{22} \\
		m_{20} & = &  \lambda (\mu_{11}^2+\sigma_{11})+(1-\lambda) (\mu_{21}^2+\sigma_{11}) \\
		m_{02} & = &  \lambda (\mu_{12}^2+\sigma_{22})+(1-\lambda) (\mu_{22}^2+\sigma_{22})  \\
		m_{11} & = &  \lambda (\mu_{11} \mu_{12}+\sigma_{12})
		+(1-\lambda) (\mu_{21} \mu_{22}+\sigma_{12}) \\
		m_{30} & = &  \lambda (\mu_{11}^3+3 \sigma_{11} \mu_{11})
		+(1-\lambda) (\mu_{21}^3+3 \sigma_{11} \mu_{21}) \\
		m_{03} & = &  \lambda (\mu_{12}^3+3 \sigma_{22} \mu_{12})
		+(1-\lambda) (\mu_{22}^3+3 \sigma_{22} \mu_{22}) \\
		m_{21} & = &  \lambda (\mu_{11}^2 \mu_{12}+\sigma_{11} \mu_{12}+2 \sigma_{12} \mu_{11})
		+(1-\lambda) (\mu_{21}^2 \mu_{22}+\sigma_{11} \mu_{22}+2 \sigma_{12} \mu_{21}) \\
		m_{12} & = &  \lambda (\mu_{11} \mu_{12}^2+\sigma_{22} \mu_{11}+2 \sigma_{12} \mu_{12})
		+(1-\lambda) (\mu_{21} \mu_{22}^2+\sigma_{22} \mu_{21}+2 \sigma_{12} \mu_{22}) \\
		\end{matrix}
		$$
Since there are more moments than parameters, one would expect that the mixture parameters can be recovered. However, the dimension of ${\rm Sec}^H_2(\mathcal{G}_{2,3})$ equals $7$. This is one less than the expected dimension of $8$. Therefore it is defective and there is no algebraic identifiability. This means that the method of moments is doomed to fail in this setting. However, if one measures moments up to order $d=4$, it is possible to uniquely recover the mixture parameters.
\end{example}

As is often observed \cite{discgauss,momvar,localdirac}, a change of coordinates to cumulants tends to yield simpler representations and faster computations. This is the case here and hence we also study the \textit{cumulant varieties} of the homoscedastic Gaussian mixtures. For Example \ref{ex:main} above, the moment variety in cumulant coordinates is simply the cone over a twisted cubic curve (see Example \ref{ex:cone}). This is not a coincidence, as is shown in Section \ref{sec:three}. 
	
Our main results, Theorems \ref{thm:degree3} and \ref{thm:classificationkn+1},  identify the defective homoscedastic moment varieties when $d=3$ and  show that the homoscedastic moment variety is not defective when $k\le n+1$. 
These are analogues of the Alexander-Hirschowitz theorem on secant-defective Veronese varieties \cite{AHoriginal}.

This paper is organized as follows. In Section 2 we present the connection between moments and cumulants. The moment varieties corresponding to homoscedastic secants are defined in Section 3. In Section 4 we give general algebraic identifiability considerations and do a careful analysis of the subcases $d=3$, $k=2$ and $n=1$. Finally, we conclude with a summary of results and list further research directions.
	
\section{Moments and Cumulants} \label{sec:two}
	
To get started, we make some remarks about moments and cumulants from an algebraic perspective. To a sufficiently integrable random variable $X$ on $\mathbb{R}^n$, associate its moments $m_{a_1,\dots,a_n}[X]$ and cumulants $\kappa_{a_1,\dots,a_n}[X]$ through the generating functions in $\mathbb{R}[\![u_1,\dots,u_n]\!]$:
\begin{equation}
M_X(u)=\sum_{(a_1,\dots,a_n)} m_{a_1,\dots,a_n}[X]\frac{u_1^{a_1}\dots u_n^{a_n}}{a_1!\dots a_n!},\quad  K_X(u)=\sum_{(a_1,\dots,a_n)} \kappa_{a_1,\dots,a_n}[X]\frac{u_1^{a_1}\dots u_n^{a_n}}{a_1!\dots a_n!}.
\end{equation}
The information obtained from moments is equivalent to that from cumulants, since they are obtained from one another through the simple transformations
\begin{equation}
M_X(u) = \exp(K_X(u)), \qquad K_X(u) = \log(M_X(u))
\end{equation}
which are well-defined, because the $0$-th moment is always one, whereas the $0$-th cumulant is always zero:
$m_{0}[X]=1,\kappa_0[X]=0$ for every random variable $X$. In particular, moments and cumulants take values in the affine hyperplanes $\mathbb{A}^M_n$ and $\mathbb{A}^K_n$ of $\mathbb{R}[\![u_1,\dots,u_n]\!]$ defined by
\begin{equation}
\mathbb{A}^M_n = \left\{  m_0 = 1 \right\}, \qquad \mathbb{A}^K_n = \left\{  \kappa_0 = 0 \right\}.
\end{equation} 
We call these hyperplanes the \emph{moment space} and the \emph{cumulant space}.
	
	Taking only moments up to order $d$, replace the power series ring $\mathbb{R}[\![ u_1,\dots,u_n  ]\!]$ with the truncated ring $\mathbb{R}[\![u_1,\dots,u_n ]\!]/(u_1,\dots,u_n)^{d+1}$, and everything goes through. In particular, there is an analogous definition of the affine hyperplanes $\mathbb{A}^M_{n,d}$ and $\mathbb{A}^K_{n,d}$ which we denote again by moment space and cumulant space.
	
	\begin{example}[Dirac distribution]
		Let $\mu=(\mu_1,\dots,\mu_n)$ in $\mathbb{R}^n$ be a point. The Dirac distribution $\delta_{\mu}$ with center $\mu$ on $\mathbb{R}^n$ is given by 
		\begin{equation}
		\int_{\mathbb{R}^n} f(x) \delta_\mu(x) := f(\mu).
		\end{equation}
		If $X$ is a random variable on $\mathbb{R}^n$ with this distribution, its moment-generating function is
		\begin{equation}
		M_X(u) = \mathbb{E}[e^{u^tX}] = e^{u^t\mu} = \sum_{(a_1,\dots,a_n)} \mu_1^{a_1}\dots \mu_n^{a_n} \frac{u_1^{a_1}\dots u_n^{a_n}}{a_1! \dots a_n!}.
		\end{equation}
		The moments of $X$ are monomials evaluated at $\mu$. On the other hand, for the cumulant generating function 
		\begin{equation}
		K_X(u) = \log M_X(u) = \log e^{u^t\mu} = u^t\mu = \mu_1u_1+\dots +\mu_nu_n,
		\end{equation}
		the linear cumulants coincide with the coordinates of $\mu$, and the higher order cumulants are all zero.
		
		This has an immediate translation into algebro-geometric terms: the parameter space for all Dirac distributions is the space $\mathbb{R}^n$, and the image of the moment map of degree $d$,  $M\colon \mathbb{R}^n \to \mathbb{A}^M_{n,d}$	is the affine $d$-th Veronese variety $V_{n,d} \subseteq \mathbb{A}^M_{n,d}$.  On the other hand, the image of the cumulant map $K\colon \mathbb{R}^n \to \mathbb{A}^K_{n,d}$ is the linear subspace given by $\{ \kappa_2 = \kappa_3 = \dots = \kappa_d = 0 \}$, where $\kappa_i$ is the degree $i$-part of an element in $\mathbb{A}^K_{n,d}$.
	\end{example}
	
	\begin{example}[Gaussian distribution]
		Let $\mu \in \mathbb{R}^n$ be a point, and $\Sigma \in \operatorname{Sym}^2\mathbb{R}^n$ an $n\times n$ symmetric and positive-definite matrix. The Gaussian distribution on $\mathbb{R}^n$ with mean $\mu$ and covariance matrix $\Sigma$ is given by the density
		\begin{equation}
		f_{(\mu,\Sigma)}(x) := \frac{1}{\sqrt{\det(2\pi \Sigma)}} e^{-\frac{1}{2}(x-\mu)^t \Sigma^{-1} (x-\mu)}.
		\end{equation}
		If $X\sim \mathcal{N}(\mu,\Sigma)$ is a Gaussian random variable with these parameters, its moment-generating function and cumulant-generating function are given by
		\begin{equation}
		M_X(u) = e^{u^t\mu + \frac{1}{2}u^t\Sigma u}, \qquad K_X(u) = u^t\mu + \frac{1}{2}u^t\Sigma u.
		\end{equation}
		The Gaussian moment variety $\mathcal{G}_{n,d}\subseteq \mathbb{A}^M_{n,d}$ consists of all Gaussian moments up to order $d$. Observe that the corresponding cumulant variety is given simply by the linear subspace $\{ \kappa_3 = \dots = \kappa_d = 0 \} \subseteq \mathbb{A}^K_{n,d}$.
	\end{example}
	
	While our focus is on Gaussian distributions, our approach applies to general location families that admit moment and cumulant varieties. We illustrate this with the next example.
	
	\begin{example}[Laplace distribution] \label{ex:Laplace}
		The (symmetric) multivariate Laplace distribution has a location parameter $\mu \in \RR^n$ and a covariance parameter $\Sigma$, a positive-definite $n\times n$ matrix. Its density function involves the modified Bessel function of the second kind (see \cite[Chapter 5]{LAPL}), but it can be defined via its simpler moment generating function:
		\begin{equation}
		M_X(u) = \frac{\exp(u^t \mu)}{1- \frac12 u^t \Sigma u}, \qquad K_X(u) = u^t\mu - \log\left( 1-\frac{1}{2}u^t\Sigma u\right)
		\end{equation}
		with radius of convergence such that $|u^t \Sigma u| < 2 $.
		
		Moments and cumulants up to order $d=3$ match with the Gaussian case. Also note that when $\Sigma = 0$, the Dirac moment generating function is recovered.
		However, when $d
		\geq 4$, the Laplace cumulants are no longer a linear space in the cumulant space. 
	\end{example}
	
The multiplicative structure of the power series ring $\mathbb{R}[\![ u_1,\dots,u_n ]\!]$ makes it particularly suitable to independence statements with respect to moments. Indeed, if $X,Y$ are two independent random variables on $\mathbb{R}^n$ then 
\begin{equation*}
M_{X+Y}(u) = \mathbb{E}[e^{u^t(X+Y)}] = \mathbb{E}[e^{u^tX}e^{u^tY}] = \mathbb{E}[u^tX] \cdot \mathbb{E}[u^tY] = M_X(u)\cdot M_Y(u).
\end{equation*}
With cumulants it is even simpler: it holds that
\begin{equation*}
K_{X+Y}(u)= \log(M_{X+Y}) = \log(M_X M_Y) = \log(M_X) + \log(M_Y) = K_X(u) + K_Y(u). 
\end{equation*}
	
The group of affine transformations $\operatorname{Aff}(\mathbb{R}^n)$ acts naturally on both moments and cumulants: indeed, for any $A\in GL(n,\mathbb{R})$ and $b\in\mathbb{R}^n$ and a random variable $X$ on $\mathbb{R}^n$, 
\begin{equation*}
M_{AX+b}(u) = M_{AX}(u)\cdot M_b(u) = \mathbb{E}[e^{u^tAX}]\mathbb{E}[e^{u^tb}] = e^{u^tb}\cdot \mathbb{E}[e^{(A^tu)^tX}] = e^{u^tb}\cdot M_X(A^tu)
\end{equation*}
and
\begin{equation*}
K_{AX+b}(u) = \log(M_{AX+b}(u)) = \log(e^{u^tb} M_X(A^tu)) = u^tb + K_X(A^tu).
\end{equation*}
In particular, note that translations correspond simply to translations in cumulant coordinates, whereas they induce a more complicated expression in moment coordinates. 
	
\section{Homoscedastic Secants} \label{sec:three}
	
When Karl Pearson introduced Gaussian mixtures to model subpopulations of crabs \cite{PEARSON}, he also proposed the \textit{method of moments} in order to estimate the parameters. The basic idea is to compute sample moments from observed data, and match them to the distribution's moments expressed in terms of the unknown parameters. The method of moments estimates are the parameters that solve these equations. This is a classical estimation method in statistics; a good survey is \cite{lindsayMOM}, and a recent `denoised' version for Gaussian mixtures is \cite{denoisedMOM}. 
	
The method of moments is very friendly for mixture models because computing moments of mixture densities is straightforward, since for every measurable function $g\colon \mathbb{R}^n \to  \mathbb{R}$
\begin{equation}
\int_{\mathbb{R}^n} g(x)\left(\sum_{i=1}^k\lambda_i f_{(\mu_i,\Sigma_i)}(x) \right) dx =\sum_{i=1}^k \lambda_i \int_{\mathbb{R}^n} g(x)f_{(\mu_i,\Sigma_i)}(x)dx,
\end{equation}
and thus the moments are just linear combinations of the corresponding Gaussian moments.
	
As hinted in the introduction, this discussion can be rephrased in geometric terms: let $\mathcal{G}_{n,d}\subseteq \mathbb{A}^M_{n,d}$ be the Gaussian moment variety on $\mathbb{R}^n$ of order $d$. Then the moments of mixtures of Gaussians are linear combinations of points in $\mathcal{G}_{n,d}$, so that their corresponding variety is the $k$-th secant variety $\operatorname{Sec}_k(\mathcal{G}_{n,d})$. 
	
The densities of \emph{homoscedastic} Gaussian mixtures, where the Gaussian components share a common covariance matrix, have the form:
\begin{equation}
\lambda_1f_{(\mu_1,\Sigma)}(x) + \dots + \lambda_k f_{(\mu_k,\Sigma)}(x)
\end{equation}
where the $\mu_i \in \mathbb{R}^n$ are the mean parameters, the $\Sigma \in \operatorname{Sym}^2 \mathbb{R}^n$ is the common covariance parameters, and the $\lambda_i \in \mathbb{R}$ with $\lambda_1+\dots+\lambda_k = 1$ are the mixture parameters. Thus, the parameter space for homoscedastic mixtures is
\begin{equation}
\Theta^H_{n,k} := (\mathbb{R}^n)^{\times k} \times \mathbb{R}^{k-1} \times \operatorname{Sym}^2\mathbb{R}^n = \{ (\mu_1,\dots,\mu_k),(\lambda_1,\dots,\lambda_k),\Sigma \,|\, \lambda_1+\dots+\lambda_k = 1 \},
\end{equation}
and it has dimension
\begin{equation}
\dim \Theta^H_{n,k} = nk + k-1 + \frac{n(n+1)}{2} = (n+1)\left( k + \frac{n}{2} \right) - 1.
\end{equation}
The moment map for homoscedastic mixtures is then an algebraic map
$$M_{n,k,d}: \Theta^H_{n,k}\to \mathbb{A}^M_{n,d}.$$
Points on the image, the moments of homoscedastic mixtures, are linear combinations of points in $\mathcal{G}_{n,d}\subseteq\mathbb{A}^M_{n,d}$ which share the same covariance matrix. 
\begin{definition}\label{defs}
The \emph{homoscedastic $k$-secant variety}, denoted $\operatorname{Sec}^H_k(\mathcal{G}_{n,d})$, is the image of the moment map $M_{n,k,d}$. The \emph{fiber dimension} $\Delta^H_{n,k,d}$ is the general fiber dimension of the map $M_{n,k,d}$,
\begin{equation}\label{homoscedasticfiber}
\Delta^H_{n,k,d} = \dim \Theta^H_{n,k} - \dim \operatorname{Sec}^H_k(\mathcal{G}_{n,d}).
\end{equation}
We say that $\operatorname{Sec}^H_k(\mathcal{G}_{n,d})$ is \emph{algebraically identifiable} if  $\Delta^H_{n,k,d}=0$.
\end{definition}
The feasibility of the method of moments is based on computing points on the fibers of the moment map $M_{n,k,d}$. Algebraic identifiability of $\operatorname{Sec}^H_k(\mathcal{G}_{n,d})$ means that a general homoscedastic Gaussian mixture in the 
homoscedastic $k$-secant variety is identifiable from its moments up to order $d$ in the sense that only finitely many Gaussian mixture distributions share the same moments up to order $d$, whereas we reserve the term \emph{rationally identifiable} if a general fiber consists of a single point, up to label swapping. In case the general fiber is not finite, then it is positive-dimensional, there is no identifiability of the parameters from the moments up to order $d$, and a higher order is needed for identifiability (cf. Remark \ref{rmk:ident} and \cite[Problem 17]{momvar}).
    
Since the dimension of $\operatorname{Sec}^H_k(\mathcal{G}_{n,d})$ is always bounded by the dimension of the ambient space $\mathbb{A}^M_{n,d}$, there is a simple estimate for the fiber dimension:
	
\begin{lemma}\label{lem:basicineq}
	For all $n,d,k$ it holds that
	\begin{equation} \label{basicineq}
	\Delta^H_{n,k,d} \geq  \max\left\{(n+1)\left( k + \frac{n}{2} \right)  - \binom{n+d}{d} , 0 \right \}.
	\end{equation}	
\end{lemma}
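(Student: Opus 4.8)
The plan is to bound the dimension of the image from above by the dimension of the ambient moment space, and then to invoke the nonnegativity of general fiber dimensions. First I would record the dimension of the moment space $\mathbb{A}^M_{n,d}$. Its coordinates are the moments $m_{a_1,\dots,a_n}$ with $a_1 + \dots + a_n \le d$, and the number of such monomials of degree at most $d$ in $n$ variables is $\binom{n+d}{d}$. Because the zeroth moment is pinned down by the normalization $m_0 = 1$ that defines the hyperplane, I conclude $\dim \mathbb{A}^M_{n,d} = \binom{n+d}{d} - 1$.

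Next, since by Definition \ref{defs} the variety $\operatorname{Sec}^H_k(\mathcal{G}_{n,d})$ is the image of the moment map $M_{n,k,d}$ sitting inside $\mathbb{A}^M_{n,d}$, its dimension cannot exceed that of the ambient space, so $\dim \operatorname{Sec}^H_k(\mathcal{G}_{n,d}) \le \binom{n+d}{d} - 1$. Substituting this together with the formula $\dim \Theta^H_{n,k} = (n+1)(k + \frac{n}{2}) - 1$ into the definition \eqref{homoscedasticfiber} of $\Delta^H_{n,k,d}$ gives
$$\Delta^H_{n,k,d} \ge \left[(n+1)\left(k + \tfrac{n}{2}\right) - 1\right] - \left[\binom{n+d}{d} - 1\right] = (n+1)\left(k + \tfrac{n}{2}\right) - \binom{n+d}{d},$$
where the two $-1$ contributions cancel. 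This produces the first argument of the maximum.

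The second bound, $\Delta^H_{n,k,d} \ge 0$, I would obtain from the theorem on the dimension of fibers: the moment map is by construction dominant onto its image $\operatorname{Sec}^H_k(\mathcal{G}_{n,d})$ (which is irreducible, being the image of the irreducible parameter space $\Theta^H_{n,k}$), so a general fiber has dimension exactly $\dim \Theta^H_{n,k} - \dim \operatorname{Sec}^H_k(\mathcal{G}_{n,d})$, and this is nonnegative because the image of a morphism never has larger dimension than its source. Combining the two lower bounds yields $\Delta^H_{n,k,d} \ge \max\{\dots,0\}$, as asserted.

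As for the main obstacle, there is really none of substance: the statement reduces to two elementary dimension inequalities. The only point demanding care is the bookkeeping for $\dim \mathbb{A}^M_{n,d}$ — in particular remembering to subtract $1$ for the fixed zeroth moment — and checking that the constant terms cancel so that the final expression has no stray $-1$.
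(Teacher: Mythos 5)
Your proposal is correct and follows essentially the same route as the paper: bound $\dim \operatorname{Sec}^H_k(\mathcal{G}_{n,d})$ by $\dim \mathbb{A}^M_{n,d} = \binom{n+d}{d}-1$ and subtract from $\dim \Theta^H_{n,k}$, with the two $-1$'s canceling. Your explicit justification of the $\geq 0$ part of the maximum (nonnegativity of general fiber dimension) is left implicit in the paper but is the same underlying observation.
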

\begin{proof}
The moment space $\mathbb{A}^M_{n,d}$ is an affine hyperplane inside the vector space $\mathbb{R}[[ u_1,\dots,u_n ]]/(u_1,\dots,u_n)^{d+1}$, hence it has dimension
\begin{equation}
\dim \mathbb{A}^M_{n,d} = \dim \mathbb{R}[[u_1,\dots,u_n]]/(u_1,\dots,u_n)^{d+1} - 1 = \binom{n+d}{d} -1.
\end{equation}
Since $\operatorname{Sec}^H_k(\mathcal{G}_{n,d}) \subseteq \mathbb{A}^M_{n,d}$ note that 
	\begin{equation}
	\Delta^H_{n,k,d}  = \dim \Theta^H_{n,k} - \dim \operatorname{Sec}^H(\mathcal{G}_{n,d}) \geq \dim \Theta^H_{n,h} - \dim \mathbb{A}^M_{n,d}
	\end{equation}
	which is exactly the inequality in the statement.
	\end{proof}
	
We expect that in general situations the inequality \eqref{basicineq} is in fact an equality. Hence, define the \emph{defect} to be
\begin{equation}
\delta^H_{n,k,d} := \Delta^H_{n,k,d} - \max\left\{(n+1)\left( k + \frac{n}{2} \right)  - \binom{n+d}{d}  , 0 \right\}.
\end{equation}

We say that $\operatorname{Sec}^H_k(\mathcal{G}_{n,d})$ is \emph{defective} if $\delta^H_{n,k,d}>0$. As observed earlier, defectivity implies non-identifiability.

\subsection{Cumulant representation} \label{sec:cumhom}
	
Let us explore how homoscedastic secants become simpler in cumulant coordinates, and how this representation can be used to check identifiability.
	
First, rephrase the situation in terms of random variables: let $Z=Z_{\Sigma}$ be a Gaussian random variable with mean $0$ and covariance matrix $\Sigma$, and let $B=B_{(\mu_1,\dots,\mu_k),(\lambda_1,\dots,\lambda_k)}$ an independent random variable with distribution given by a mixture of Dirac distributions:
\begin{equation}\label{eq:densitymixtureD}
\lambda_1\delta_{\mu_1}(x)+\dots+\lambda_k\delta_{\mu_k}(x).
\end{equation} 
Then, the random variable $Z+B$ has density given by the homoscedastic mixture \eqref{eq:densitymixtureG}. Moreover, if $m=\mu_1\lambda_1+\dots+\mu_k\lambda_k$ is the mean of $B$, we write $B=A+m$, where $A$ is a centered mixture of Dirac distributions. 
	
One can compute cumulants of this random variable as follows:
\begin{equation}
K_{B+Z}(u) = K_{B}(u) + K_Z(u) = K_{A}(u)+m^t u + \frac{1}{2}u^t\Sigma u
\end{equation}
and this suggests to parametrize the homoscedastic secants in cumulant coordinates as follows:
\begin{equation}
K\colon \Theta_{n,k}^0 \times \mathbb{R}^n \times \operatorname{Sym}^2\mathbb{R}^n \to \mathbb{A}_{n,d}^K, \qquad (A,m,\Sigma) \mapsto K_A(u) + m^tu + \frac{1}{2}u^t\Sigma u
\end{equation}
where $\Theta_{n,k}^0$ parametrizes the centered mixtures of Dirac distributions
\begin{equation}
\Theta_{n,k}^0 = \{ (\mu_1,\dots,\mu_k),(\lambda_1,\dots,\lambda_k) \,|\, \lambda_1\mu_1+\dots+\lambda_k\mu_k=0, \lambda_1+\dots+\lambda_k=1 \}
\end{equation}
The \textit{cumulant homoscedastic secant variety} $\log(\operatorname{Sec}^H_{k}(\mathcal{G}_{n,d}))$ is the image of the map $K$. Since in this variety, one can freely translate by the elements in $\mathbb{R}^n$ and $\operatorname{Sym}^2(\mathbb{R}^n)$, the first cumulants and the second cumulants can take any value. The constraints are in the cumulants of order three and higher. We summarize this discussion in the following lemma.
	
\begin{lemma}\label{lemma:cone}
		Let $\mathbb{A}^{K,3}_{n,d}$ be the space of cumulants of order at least three and at most $d$,  let
		\begin{equation}\label{phimap}
		\phi_{n,k,d}\colon \Theta^0_{n,k} \to \mathbb{A}^{K,3}_{n,d}, \qquad A\mapsto K_A(u)_3+K_A(u)_4+\dots+K_A(u)_d 
		\end{equation}
		be the cumulant map and let $C^0_{n,k,d}$ denote the closure $\overline{\phi_{n,k,d}(\Theta^0_{n,k})}$.  Then the cumulant homoscedastic secant variety $\log(\operatorname{Sec}^H_k(\mathcal{G}_{n,d}))$ is a cone over $C_{n,k,d}^0$.
	\end{lemma}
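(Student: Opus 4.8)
The plan is to analyze the parametrization $K$ one graded piece at a time and to show that the degree-one and degree-two parts of its image decouple from, and are free of, the higher-degree parts. First I would fix the decomposition of the cumulant space into its homogeneous components by degree,
\begin{equation*}
\mathbb{A}^K_{n,d} = V_1 \oplus V_2 \oplus \mathbb{A}^{K,3}_{n,d}, \qquad V_1 = \mathbb{R}^n, \quad V_2 = \operatorname{Sym}^2\mathbb{R}^n,
\end{equation*}
and write $L := V_1 \oplus V_2$ for the span of the first and second cumulants. This $L$ is a linear subspace complementary to $\mathbb{A}^{K,3}_{n,d}$, the space in which $C^0_{n,k,d}$ lives. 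The computation recorded just above the statement gives, for $(A,m,\Sigma) \in \Theta^0_{n,k}\times\mathbb{R}^n\times\operatorname{Sym}^2\mathbb{R}^n$,
\begin{equation*}
K(A,m,\Sigma) = m^t u + \tfrac12 u^t\Sigma u + K_A(u),
\end{equation*}
and I would read off its graded pieces.

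The crux is that the degrees decouple. Since $A$ is a \emph{centered} mixture of Diracs, $K_A(u)$ has no linear term, so the degree-one part of $K(A,m,\Sigma)$ is exactly $m^t u$ and sweeps out all of $V_1$ as $m$ varies. Writing $Q := K_A(u)_2$ for the second-cumulant quadratic form attached to $A$, the degree-two part is $Q + \tfrac12 u^t\Sigma u$; for each fixed $A$ this is the translate by $Q$ of the cone of positive-definite forms $\tfrac12 u^t\Sigma u$, which is Zariski dense in $V_2$. By contrast, all contributions in degrees three through $d$ come solely from $K_A$ and, by definition, equal $\phi_{n,k,d}(A)$, with no dependence on $m$ or $\Sigma$. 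Thus for each fixed $A$ the fibre of the image over $\phi_{n,k,d}(A)$ is a dense subset of the affine space $L$, independent of the third coordinate.

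To conclude, I would take Zariski closures, as is already built into the definition of $C^0_{n,k,d}$. For each fixed $A$ the fibre of the image over $\phi_{n,k,d}(A)$ is dense in $L$, so its closure is all of $L\times\{\phi_{n,k,d}(A)\}$; letting $A$ vary and closing up, and using the general fact $\overline{L\times S}=L\times\overline{S}$ valid because $L$ is closed, the closure of the image is
\begin{equation*}
\log\!\bigl(\operatorname{Sec}^H_k(\mathcal{G}_{n,d})\bigr)=\overline{\operatorname{im}K}=L\times\overline{\phi_{n,k,d}(\Theta^0_{n,k})}=L+C^0_{n,k,d}.
\end{equation*}
This last expression $\{\ell+c:\ell\in L,\ c\in C^0_{n,k,d}\}$, obtained from the direct sum $\mathbb{A}^K_{n,d}=L\oplus\mathbb{A}^{K,3}_{n,d}$ with $C^0_{n,k,d}\subseteq\mathbb{A}^{K,3}_{n,d}$, is exactly the cone over $C^0_{n,k,d}$ with vertex the linear space $L$, proving the claim. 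The one step demanding care is the degree-two analysis: because $\Sigma$ is required to be positive definite, for fixed $A$ the degree-two coordinate does not literally fill $V_2$, and it is precisely here that passing to the Zariski closure—in which the positive-definite cone is dense in $\operatorname{Sym}^2\mathbb{R}^n$—is needed. This is the main, if mild, obstacle to an otherwise purely formal degree-counting argument.
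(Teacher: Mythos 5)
Your proof is correct and is essentially the paper's own argument: the lemma there merely summarizes the preceding discussion, which writes $K_{Z+B}(u) = K_A(u) + m^tu + \tfrac{1}{2}u^t\Sigma u$, observes that the first- and second-order cumulants can be translated freely, and that the constraints in degrees three through $d$ come only from $A$ via $\phi_{n,k,d}$. The one caveat you flag as the "main obstacle" is in fact a non-issue under the paper's conventions: the parameter space takes $\Sigma$ in all of $\operatorname{Sym}^2\mathbb{R}^n$ (not just positive-definite matrices), so the degree-two part sweeps $V_2$ exactly and your Zariski-density step for that coordinate is not needed.
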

	
	\begin{remark}\label{remark:equationscumulants}
		In particular, the equations for the cumulant homoscedastic secant variety $\log(\operatorname{Sec}^H(\mathcal{G}_{n,d}))$ inside $\mathbb{A}^{K}_{n,d}$ are exactly the same as the equations for $C^0_{n,k,d}$ inside $\mathbb{A}^{K,3}_{n,d}$.
	\end{remark}
	
	The fiber dimension $\Delta_{n,k,d}^{H}$ can also be computed as the fiber dimension of the map $\phi_{n,k,d}$:
	
	\begin{lemma}\label{lemma:phi}
		The fiber dimension $\Delta^H_{n,k,d}$ is equal to the fiber dimension of $\phi_{n,k,d}$. In other words
		\begin{equation}
		\Delta^H_{n,k,d} = \dim \Theta^0_{n,k} - \dim C^0_{n,k,d} = (k-1)(n+1) - \dim C^0_{n,k,d}.
		\end{equation}	
	\end{lemma}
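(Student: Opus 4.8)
The plan is to exploit the passage from moments to cumulants, under which $M_{n,k,d}$ becomes the explicit map $K$ written just above the statement, and then to observe that in $K$ the parameters $m$ and $\Sigma$ only govern the first and second cumulants, which may be prescribed freely. First I would record the isomorphism of parameter spaces $\Psi\colon \Theta^0_{n,k}\times\mathbb{R}^n\times\operatorname{Sym}^2\mathbb{R}^n \to \Theta^H_{n,k}$ coming from the decomposition $B=A+m$: given $(A,m,\Sigma)$ one takes the means to be $\mu_i'+m$, keeps the weights $\lambda_i$ and the covariance $\Sigma$; this is a polynomial isomorphism, with inverse sending $(\mu_i,\lambda_i,\Sigma)$ to the centered configuration together with $m=\sum_i\lambda_i\mu_i$. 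Combined with the isomorphism $\log\colon \mathbb{A}^M_{n,d}\to\mathbb{A}^K_{n,d}$ of truncated spaces, the computation $K_{B+Z}(u)=K_A(u)+m^tu+\tfrac12 u^t\Sigma u$ recorded before the statement shows precisely that $K=\log\circ M_{n,k,d}\circ\Psi$. Since both $\log$ and $\Psi$ are isomorphisms, the fibers of $M_{n,k,d}$ and of $K$ are isomorphic, so their general fiber dimensions agree and the former is $\Delta^H_{n,k,d}$.

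The heart of the argument is to compare the fiber dimension of $K$ with that of $\phi_{n,k,d}$. Using the grading of the cumulant space by degree, $\mathbb{A}^K_{n,d}=\mathbb{R}^n\oplus\operatorname{Sym}^2\mathbb{R}^n\oplus\mathbb{A}^{K,3}_{n,d}$, I would write $K(A,m,\Sigma)$ in components. Because $A$ is centered, $K_A(u)$ has no linear term, so the degree-one component is exactly $m$; the degree-two component is the sum of the second cumulant of $A$ and $\tfrac12 u^t\Sigma u$; and the components of order at least three are exactly $\phi_{n,k,d}(A)$. The key point, which is also what underlies the cone structure of Lemma \ref{lemma:cone}, is that $m$ ranges over all of $\mathbb{R}^n$ and $\Sigma$ over all of $\operatorname{Sym}^2\mathbb{R}^n$ independently of $A$. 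Consequently, for a point $(\ell,q,h)$ in the image, its $K$-fiber is cut out by $m=\ell$, by the equation expressing $\Sigma$ in terms of $q$ and the second cumulant of $A$, and by $\phi_{n,k,d}(A)=h$; once $A$ is chosen in the fiber $\phi_{n,k,d}^{-1}(h)$, both $m$ and $\Sigma$ are uniquely determined. Hence the projection $(A,m,\Sigma)\mapsto A$ identifies the $K$-fiber over $(\ell,q,h)$ with $\phi_{n,k,d}^{-1}(h)$.

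Putting these together, the general fiber of $M_{n,k,d}$ is isomorphic to the general fiber of $\phi_{n,k,d}$, so $\Delta^H_{n,k,d}$ equals the general fiber dimension of $\phi_{n,k,d}$. Since $C^0_{n,k,d}=\overline{\phi_{n,k,d}(\Theta^0_{n,k})}$ is the image closure, the theorem on fiber dimensions of a dominant morphism gives $\Delta^H_{n,k,d}=\dim\Theta^0_{n,k}-\dim C^0_{n,k,d}$, and the count $\dim\Theta^0_{n,k}=nk+(k-1)-n=(k-1)(n+1)$, subtracting the $n$ centering conditions $\sum_i\lambda_i\mu_i=0$ and the normalization $\sum_i\lambda_i=1$, finishes the identity. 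I expect the only delicate point to be the claim that $m$ and $\Sigma$ prescribe the first and second cumulants freely and independently of the higher cumulants of $A$; verifying this amounts to checking the degreewise splitting above together with the fact that translation acts linearly in cumulant coordinates, after which the isomorphism of fibers, and hence the equality of fiber dimensions, is immediate.
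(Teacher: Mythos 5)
Your proof is correct and follows essentially the paper's own route: both arguments rest on the decomposition $\Theta^H_{n,k}\cong\Theta^0_{n,k}\times\mathbb{R}^n\times\operatorname{Sym}^2\mathbb{R}^n$ together with the degreewise splitting of the cumulant map, which is exactly the cone structure of Lemma \ref{lemma:cone}. The only difference is cosmetic: the paper invokes Lemma \ref{lemma:cone} as a black box and performs a dimension count, whereas you re-derive that splitting inline and upgrade it to an identification of the fibers of $M_{n,k,d}$ with those of $\phi_{n,k,d}$, a slightly stronger statement obtained by the same idea.
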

	\begin{proof}
		The fiber dimension $\Delta^H_{n,k,d}$ is the difference $\dim \Theta^H_{n,k} - \dim \log(\operatorname{Sec}^H_{k}(\mathcal{G}_{n,d}))$. We know that $\Theta^H_{n,k} \cong \Theta^0_{n,k} \times \mathbb{R}^n \times \operatorname{Sym}^2\mathbb{R}^n$. Moreover, Lemma \ref{lemma:cone} says that $\log(\operatorname{Sec}^H_{k}(\mathcal{G}_{n,d}))$ is the cone over $C^0_{n,k,d}$, which is precisely $\mathbb{R}^n\times \operatorname{Sym}^2\mathbb{R}^n \times C^0_{n,k,d}$, so that the first equality follows. For the second equality, the dimension of $\Theta^0_{n,k}$ can be computed as $nk+k-1-n = (n+1)(k-1)$.
	\end{proof}
	
	\begin{example}[$n=k=2 $ , $ d=3$]\label{ex:cone}
	Revisiting Example \ref{ex:main} from the introduction, we concluded that ${\rm Sec}^H_2(\mathcal{G}_{2,3}) \subset \mathbb{A}^M_{2,3} \cong \mathbb{A}^9$ is expected to be a hypersurface but it is actually of codimension 2. The ideal of ${\rm Sec}^H_2(\mathcal{G}_{2,3})$ is Cohen-Macaulay and determinantal (generated by the maximal minors of a $6 \times 5$-matrix) as described in \cite[Proposition 19]{momvar}. The homoscedastic cumulant variety $\log(\operatorname{Sec}^H_{2}(\mathcal{G}_{2,3}))$ is defined by the vanishing of the $2\times 2$ minors of $$\begin{pmatrix}k_{30}&k_{21}&k_{12}\\ k_{21}&k_{12}&k_{03}\\ \end{pmatrix}.$$ Note that indeed the first and second order cumulants $k_{10},k_{01},k_{20},k_{11},k_{22}$ do not appear in the equations above, so that the cumulant variety is the cone over the twisted cubic curve.
	\end{example}

	\begin{remark}\label{remark:estimationcumulants}
		To estimate the mixture parameters from the cumulants it is enough to consider the map $\phi_{n,k,d}$ of Lemma \ref{lemma:cone}. Indeed, suppose that we have a homoscedastic mixture with parameters $(((\lambda_1,\dots,\lambda_k),(\mu_1,\dots,\mu_k)),m,\Sigma) \in \Theta^0_{n,k}\times \mathbb{R}^n \times \operatorname{Sym}^2\mathbb{R}^n$ and suppose that its cumulants are known, so that in polynomial form 
		\begin{align}
		\begin{split}
		\kappa_1(u) &= m^t u \\
		\kappa_2(u) &= K_A(u)_2 + \frac{1}{2}u^t\Sigma u \\
		\kappa_3(u) &= K_A(u)_3 \\
		\kappa_4(u) &= K_A(u)_4 \\
		\vdots
		\end{split}.
		\end{align}
		Then to recover the parameters one can first try to recover the $\lambda_i$ and the $\mu_i$ from the cumulants of order three and higher, and then compute $m$ and $\Sigma$ from the cumulants of order one and two.
	\end{remark}
	
	\subsection{Veronese secants}
	
	We briefly observe that we can recast the above discussion in a way that makes apparent the connection to mixtures of Dirac distributions and, hence, to secants of Veronese varieties. To work with classical secant varieties, this time we work in moment coordinates. Now, every homoscedastic mixture is the distribution of a random variable of the form $Z+B$, where $B$ is a mixture of Dirac distributions and $Z$ is a centered Gaussian of covariance $\Sigma$, independent from $B$. Thus the moment generating function of this variable is
	\begin{equation}\label{eq:momentsZ+B}
	M_{Z+B}(u) = M_Z(u)M_B(u) = e^{\frac{1}{2}u^t\Sigma u} \cdot M_B(u).     
	\end{equation}
	Therefore the role of the covariance parameter is decoupled from the others: In particular, for $\Sigma=0$, one obtains the moment variety for mixtures of Dirac distributions. When restricting to moments $M(u)_d$ of degree at most $d$, this is precisely the $k$-secants to the Veronese variety $\operatorname{Sec}_k(\mathcal{V}_{n,d})$. The additive group $\operatorname{Sym}^2\mathbb{R}^n$ acts on the moment space $\mathbb{A}^M_{n,d}$ by
	\begin{equation}
	\operatorname{Sym}^2\mathbb{R}^n \times \mathbb{A}^M_{n,d} \to \mathbb{A}^M_{n,d} , \qquad (\Sigma,M(u)_d) \mapsto e^{\frac{1}{2}u^t\Sigma u} \cdot M(u)_d
	\end{equation}
	and so \eqref{eq:momentsZ+B} says that $\operatorname{Sec}^H_k(\mathcal{G}_{n,d})$ is the union of all the orbits of the points in $\operatorname{Sec}_k(\mathcal{V}_{n,d})$ under this action.
	
	This is useful because we can exploit well-known results on secants of Veronese varieties to address identifiability. First, let $\Delta^{\mathcal{V}}_{n,k,d}$ denote the fiber dimension of the $k$-secants to the Veronese variety $\operatorname{Sec}_k(\mathcal{V}_{n,d}) \subseteq \mathbb{A}^M_{n,d}$: by definition, this is 
	\begin{equation}\label{veronesefiber}
	\Delta^{\mathcal{V}}_{n,k,d} := nk+k-1 - \dim \operatorname{Sec}_k(\mathcal{V}_{n,d}).
	\end{equation}
	A basic estimate for the dimension of $\operatorname{Sec}_k(\mathcal{V}_{n,d})$ is given by the dimension of the ambient space $\dim \mathbb{A}^{M}_{n,d} = \binom{n+d}{d}-1$, hence 
	\begin{equation}
	\Delta^{\mathcal{V}}_{n,k,d} \geq \max\left\{ (n+1)k - \binom{n+d}{d}, 0 \right\}
	\end{equation}
	so that we can define the defect for the $k$-th secant of the Veronese variety as
	\begin{equation}
	\delta^{\mathcal{V}}_{n,k,d} := \Delta^{\mathcal{V}}_{n,k,d} - \max\left\{ (n+1)k - \binom{n+d}{d}, 0 \right\}.
	\end{equation}
	This number was famously computed by Alexander and Hirschowitz \cite{AHoriginal}, see also \cite{BOonAH}:
	\begin{theorem}[Alexander-Hirschowitz]
		The defect for the  Veronese variety is always zero, except in the following exceptional cases 
		\begin{align}
		& d = 2, 2\leq k\leq n  & &\Delta^{\mathcal{V}}_{n,k,2} = \frac{k(k-1)}{2} \nonumber \\
		& n=2, d=4, k=5 & &\delta^{\mathcal{V}}_{2,5,4} = 1 \nonumber\\
		& n=3, d=4, k=9 & &\delta^{\mathcal{V}}_{3,9,4} = 1  \\
		& n=4, d=3, k=7 & &\delta^{\mathcal{V}}_{4,7,3} = 1 \nonumber\\
		& n=4, d=4, k=14 & &\delta^{\mathcal{V}}_{4,14,4} = 1  \nonumber
		\end{align}
	\end{theorem}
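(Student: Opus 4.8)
This is the celebrated theorem of Alexander and Hirschowitz, so rather than a full proof I sketch the strategy of \cite{AHoriginal} (see also \cite{BOonAH}). The plan is first to translate the statement into a polynomial interpolation problem. Since the Dirac moment map is, in the affine chart $m_0=1$, the classical Veronese parametrization $\mu\mapsto[1:\mu:\dots]$ followed by forming $k$-secants, the fiber dimension $\Delta^{\mathcal V}_{n,k,d}$ and the defect $\delta^{\mathcal V}_{n,k,d}$ coincide with the usual projective secant defect of $\mathcal V_{n,d}\subseteq\PP^{\binom{n+d}{d}-1}$. By Terracini's Lemma, the affine tangent space to the cone over $\operatorname{Sec}_k(\mathcal V_{n,d})$ at a general point $\sum_i\lambda_i v_d(p_i)$ is spanned by the tangent spaces to $\mathcal V_{n,d}$ at the $v_d(p_i)$, whose dimension equals the number of conditions imposed by the scheme $Z=2p_1+\cdots+2p_k$ of $k$ general double points on $|\mathcal O_{\PP^n}(d)|$. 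This reduces everything to computing $\dim H^0(\PP^n,\mathcal I_Z(d))$, and one obtains $\delta^{\mathcal V}_{n,k,d}=\dim H^0(\PP^n,\mathcal I_Z(d)) - \max\{\binom{n+d}{d}-k(n+1),\,0\}$. The theorem then asserts that $k$ general double points impose independent conditions on degree-$d$ forms outside the five listed families.

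Next I would verify that each listed case is genuinely defective by exhibiting the unexpected hypersurface. For $d=2$ and $2\le k\le n$ the $k$ general points are linearly independent, so a quadric singular at all of them has them in its kernel and hence rank at most $n+1-k$; the space of such quadrics has dimension $\binom{n-k+2}{2}$, which yields the stated fiber dimension $\tfrac{k(k-1)}{2}$. For the three sporadic triples with $d=4$, namely $(n,k)=(2,5),(3,9),(4,14)$, one has $k=\binom{n+2}{2}-1$, so the points lie on a unique quadric $Q$, and the double quadric $2Q$ is a quartic singular at all of them, giving one extra section and defect one; the remaining case $(4,3,7)$ is confirmed by a direct apolarity (or computer-algebra) computation.

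The substance of the theorem is that no other case is defective, and this I would prove by the \emph{Horace method} and its differential refinement, by induction on $n$ and $d$. Fixing a hyperplane $H\cong\PP^{n-1}$, the inductive step rests on the Castelnuovo restriction--residuation sequence $0\to \mathcal I_{\operatorname{Res}_H Z}(d-1)\to \mathcal I_Z(d)\to \mathcal I_{Z\cap H,\,H}(d)\to 0$, which controls the cohomology of $\mathcal I_Z(d)$ on $\PP^n$ in terms of the residual scheme in degree $d-1$ on $\PP^n$ and the trace in degree $d$ on $H$. One specializes a well-chosen number of the double points onto $H$; by semicontinuity, if the specialized configuration imposes independent conditions then so does the general one, and both smaller problems fall under the inductive hypothesis. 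Base cases, and the reduction from arbitrary $k$ to the critical value where $\binom{n+d}{d}-k(n+1)$ changes sign, are handled by monotonicity in $k$.

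The decisive difficulty, and the reason a naive induction fails, is that a double point has length $n+1$ and does not split cleanly when its support is placed on $H$: its trace on $H$ is a double point of length $n$, while the residual records only a simple point, so the lengths almost never match the target numerology. The differential Horace lemma of \cite{AHoriginal} overcomes this by degenerating points into first-order neighbourhoods of $H$, allowing a fractional part of each double point to be assigned to the trace and the rest to the residual. Making this bookkeeping balance for every pair $(n,d)$ is the crux of the argument, and it is precisely the configurations where it cannot be balanced that surface as the four sporadic exceptions. I expect this differential balancing to be the main obstacle of the whole proof.
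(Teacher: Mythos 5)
The paper does not prove this theorem itself; it quotes it as the classical result of \cite{AHoriginal} (see also the survey \cite{BOonAH}), so its ``proof'' is by citation. Your sketch is a correct outline of precisely that cited argument---the Terracini reduction of $\delta^{\mathcal{V}}_{n,k,d}$ to conditions imposed by $k$ general double points, the verification of the exceptional cases (rank-bounded quadrics for $d=2$, the unique quadric through $k=\binom{n+2}{2}-1$ points doubled into a quartic for the $d=4$ triples), and the differential Horace induction for non-defectivity---so it matches the approach the paper defers to.
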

	
	Moreover, for a general point $M(u)\in \operatorname{Sec}_k(\mathcal{V}_{n,d})$, consider the closed subset of $\operatorname{Sym}^2\mathbb{R}^n$ given by
	\begin{equation}
	D(M) := \{ \Sigma \in \operatorname{Sym}^2\mathbb{R}^n \,|\, e^{\frac{1}{2}u^t\Sigma u} \cdot M(u) \in \operatorname{Sec}_k(\mathcal{V}_{n,d})  \}.
	\end{equation} 
	We have the following relation between the fiber dimensions  (\ref{homoscedasticfiber}) and ($\ref{veronesefiber}$): 
	\begin{proposition}\label{prop:propD}
		It holds that
		\begin{equation}
		\Delta^H_{n,k,d} = \Delta^{\mathcal{V}}_{n,k,d} + \dim D(M) 
		\end{equation}
		where $M\in\operatorname{Sec}_k(\mathcal{V}_{n,d})$ is a general point.
	\end{proposition}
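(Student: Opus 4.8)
The plan is to exploit the orbit description of $\operatorname{Sec}^H_k(\mathcal{G}_{n,d})$ just established: the moment map factors as $M_{n,k,d}(\theta,\Sigma)=e^{\frac{1}{2}u^t\Sigma u}\cdot\pi(\theta)$, where $\pi\colon(\mathbb{R}^n)^{\times k}\times\mathbb{R}^{k-1}\to\operatorname{Sec}_k(\mathcal{V}_{n,d})$ is the moment map of $k$-mixtures of Dirac distributions, a dominant map whose general fibre has dimension $\Delta^{\mathcal{V}}_{n,k,d}$ by \eqref{veronesefiber}. Working with Zariski closures (or complexifications) so that the theorem on fibre dimensions of dominant morphisms applies, I would compute the general fibre of $M_{n,k,d}$ by slicing it according to the covariance coordinate and recognising the two resulting pieces as a copy of $D(M)$ and a fibre of $\pi$.

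Concretely, fix a general point $p\in\operatorname{Sec}^H_k(\mathcal{G}_{n,d})$ and write $p=e^{\frac{1}{2}u^t\Sigma_0 u}\cdot M$ with $M=\pi(\theta_0)\in\operatorname{Sec}_k(\mathcal{V}_{n,d})$. A pair $(\theta,\Sigma)$ lies in $M_{n,k,d}^{-1}(p)$ precisely when $\pi(\theta)=e^{\frac{1}{2}u^t(\Sigma_0-\Sigma)u}\cdot M$. Projecting this fibre onto the coordinate $\Sigma$, and using that $\pi$ is dominant onto $\operatorname{Sec}_k(\mathcal{V}_{n,d})$ so that the membership condition is exactly the defining condition of $D(M)$, the image of the projection is
\[
\{\,\Sigma\in\operatorname{Sym}^2\mathbb{R}^n : e^{\frac{1}{2}u^t(\Sigma_0-\Sigma)u}\cdot M\in\operatorname{Sec}_k(\mathcal{V}_{n,d})\,\}=\Sigma_0-D(M),
\]
an affine translate of $D(M)$, hence of dimension $\dim D(M)$. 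Over a point $\Sigma$ of this base the corresponding fibre is $\pi^{-1}(N)$ with $N=e^{\frac{1}{2}u^t(\Sigma_0-\Sigma)u}\cdot M$, so the fibre-dimension theorem applied to this projection yields $\dim M_{n,k,d}^{-1}(p)=\dim D(M)+\Delta^{\mathcal{V}}_{n,k,d}$, which is the claimed identity since $\Delta^H_{n,k,d}$ is by definition (Definition \ref{defs}) the general fibre dimension of $M_{n,k,d}$. The same count can be packaged as the single surjection $\operatorname{Sym}^2\mathbb{R}^n\times\operatorname{Sec}_k(\mathcal{V}_{n,d})\to\operatorname{Sec}^H_k(\mathcal{G}_{n,d})$, $(\Sigma,N)\mapsto e^{\frac{1}{2}u^t\Sigma u}\cdot N$, whose general fibre is again a translate of $D(M)$, combined with the dimension formula for $\dim\Theta^H_{n,k}$.

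The main obstacle is genericity: I must ensure that as $\Sigma$ ranges over $\Sigma_0-D(M)$ the point $N=e^{\frac{1}{2}u^t(\Sigma_0-\Sigma)u}\cdot M$ meets the open locus of $\operatorname{Sec}_k(\mathcal{V}_{n,d})$ where $\pi$ attains its generic fibre dimension $\Delta^{\mathcal{V}}_{n,k,d}$, and that the value $\dim D(M)$ appearing here agrees with the one attached to a general $M\in\operatorname{Sec}_k(\mathcal{V}_{n,d})$, as in the statement. Both are handled by upper semicontinuity of fibre dimension: taking $\Sigma=\Sigma_0$ already gives $N=M=\pi(\theta_0)$ with $\dim\pi^{-1}(M)=\Delta^{\mathcal{V}}_{n,k,d}$ for general $\theta_0$, so a dense open subset of the base $\Sigma_0-D(M)$ lands in the generic locus; moreover, for any two representatives $M,N$ of the same point the sets $D(M)$ and $D(N)$ differ by a translation, so $\dim D$ is constant along a fibre and equals its generic value for general $p$. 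The one point demanding genuine care, rather than a bare dimension count, is to check that a general $p$ forces $M$ into the open dense locus of $\operatorname{Sec}_k(\mathcal{V}_{n,d})$ and away from any jump locus of $\dim D$.
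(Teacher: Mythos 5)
Your proposal is correct and follows essentially the same route as the paper: the paper factors $M_{n,k,d}$ through the surjection $\operatorname{Sym}^2(\mathbb{R}^n)\times\operatorname{Sec}_k(\mathcal{V}_{n,d})\to\operatorname{Sec}^H_k(\mathcal{G}_{n,d})$, sums the two fiber dimensions, and identifies the fiber of the second map as the translate $\Sigma_0 - D(M)$, which is exactly your slicing argument in its ``packaged'' form. Your extra attention to semicontinuity and the jump locus of $\dim D$ is sound but only makes explicit genericity points the paper leaves implicit.
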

	\begin{proof}
		By the previous discussion, the moment map for homoscedastic mixtures factors as a composition of two surjective maps
		\begin{equation}
		\Theta^H_{n,k} \to \operatorname{Sym}^2(\mathbb{R}^n) \times \operatorname{Sec}_k(\mathcal{V}_{n,d}) \to \operatorname{Sec}^H_{k}(\mathcal{G}_{n,d}).
		\end{equation}
		Hence, the fiber dimension of the composite map is the sum of the fiber dimensions of the two factors. For the first one this is $\Delta^{\mathcal{V}}_{n,k,d}$, so it remains to consider the second. Denote the second factor by $\rho \colon \operatorname{Sym}^2(\mathbb{R}^n) \times \operatorname{Sec}_k(\mathcal{V}_{n,d}) \to \operatorname{Sec}^H_{k}(\mathcal{V}_{n,d})$ and let $(\Sigma_o,M_o(u)) \in  \operatorname{Sym}^2(\mathbb{R}^n) \times \operatorname{Sec}_k(\mathcal{V}_{n,d})$ be a general point. The fiber is
		\begin{align}
		\begin{split}
		\rho^{-1}(\rho(\Sigma_o,M_o(u)))  = & \left\{ (\Sigma,M(u)) \,|\, e^{\frac{1}{2}u^t\Sigma u}\cdot M(u) = e^{\frac{1}{2}u^t\Sigma_o u} \cdot M_o(u) \right\} \\
		= & \{ (\Sigma,M(u)) \,|\, M(u) = e^{\frac{1}{2}u^t(\Sigma_o-\Sigma)u} \cdot M_o(u)  \} \\
		\cong & \{ \Sigma \in \operatorname{Sym}^2(\mathbb{R}^n) \,|\, e^{\frac{1}{2}u^t(\Sigma_o-\Sigma)u} \cdot M_o(u) \in \operatorname{Sec}_k(\mathcal{V}_{n,d}) \} \\
		= & \Sigma_o  - \{ \Sigma' \in \operatorname{Sym}^2(\mathbb{R}^n) \,|\, e^{\frac{1}{2}u^t\Sigma'u} \cdot M_o(u) \in \operatorname{Sec}_k(\mathcal{V}_{n,d})  \} \cong D^K(M_o),
		\end{split}
		\end{align} 
		concluding the proof.
	\end{proof}
	
	\begin{remark}
		In the range $(n+1)\left(k + \frac{n}{2}\right)\leq \binom{n+d}{d}$ where we expect identifiability for homoscedastic Gaussian mixtures, we see that  $\Delta^H_{n,k,d}=\delta^H_{n,k,d}$, and Alexander-Hirschowitz says that $\Delta^{\mathcal{V}}_{n,k,d} = \delta^{\mathcal{V}}_{n,k,d} = 0$. Hence Proposition \ref{prop:propD} yields
		\begin{equation}
		\delta^{H}_{n,k,d} = \dim D(M)
		\end{equation}
	\end{remark}

	\section{Moment Identifiability}\label{sec:four}
	
	Now we start to determine identifiability in various cases. To do so, it is convenient to change notation slightly. Up to now, we have identified moments and cumulants with their corresponding generating functions. In the next sections, it is useful to identify the parameters with polynomials as well. We replace the location parameter $\mu = (\mu_1,\dots,\mu_n)$ with the corresponding linear polynomial $u^t\mu=\mu_1u_1+\dots+\mu_nu_n$ and we replace the covariance parameter $\Sigma$ with the quadric $\frac{1}{2}u^t\Sigma u$. Of course, the two representations are equivalent, but the polynomial formalism is better suited to the cumulant space and the moment space. In particular, the linear polynomials live in the dual vector space $V=\operatorname{Hom}(\mathbb{R}^n,\mathbb{R})$, whereas the quadratic polynomials live in $\operatorname{Sym}^2 V$.
	
	The next inequality reflects the fact that increasing the order of moments (or cumulants) measured results in better identifiability:
	
	\begin{lemma}\label{lemma:monotondelta}
		The fiber dimensions of general fibers of $M_{n,k,d}$ and $M_{n,k,d+1}$ satisfy:
		\begin{equation}
		\Delta^H_{n,k,d} \geq \Delta^H_{n,k,d+1}.
		\end{equation}
	\end{lemma}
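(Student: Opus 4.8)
The plan is to reduce the claimed inequality to a comparison of the dimensions of the two homoscedastic secant varieties. Since the parameter space $\Theta^H_{n,k}$ does not depend on the moment order, and $\Delta^H_{n,k,d} = \dim\Theta^H_{n,k} - \dim\operatorname{Sec}^H_k(\mathcal{G}_{n,d})$ by Definition \ref{defs}, the inequality $\Delta^H_{n,k,d}\ge\Delta^H_{n,k,d+1}$ is equivalent to
$$\dim\operatorname{Sec}^H_k(\mathcal{G}_{n,d})\ \le\ \dim\operatorname{Sec}^H_k(\mathcal{G}_{n,d+1}).$$
So it suffices to exhibit a surjection from the higher-order secant variety onto the lower-order one.

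First I would introduce the truncation map $\pi\colon \mathbb{A}^M_{n,d+1}\to\mathbb{A}^M_{n,d}$ that forgets the degree-$(d+1)$ part of a moment series. This is the affine-linear projection induced by the ring quotient $\mathbb{R}[[u_1,\dots,u_n]]/(u_1,\dots,u_n)^{d+2}\to\mathbb{R}[[u_1,\dots,u_n]]/(u_1,\dots,u_n)^{d+1}$. The key structural observation is that the two moment maps are compatible with this projection: for any parameters $\theta\in\Theta^H_{n,k}$, the moments of order at most $d$ of the corresponding homoscedastic mixture are the same whether we compute them directly in $\mathbb{A}^M_{n,d}$ or read them off the first coordinates of the point in $\mathbb{A}^M_{n,d+1}$. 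In other words $M_{n,k,d}=\pi\circ M_{n,k,d+1}$.

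Taking images, this identity gives $\pi\bigl(\operatorname{Sec}^H_k(\mathcal{G}_{n,d+1})\bigr)=\operatorname{Sec}^H_k(\mathcal{G}_{n,d})$, so the restriction of $\pi$ is a surjective morphism between the two secant varieties. Since the image of a variety under a morphism has dimension at most that of the source, we conclude $\dim\operatorname{Sec}^H_k(\mathcal{G}_{n,d})\le\dim\operatorname{Sec}^H_k(\mathcal{G}_{n,d+1})$, which is exactly what was needed.

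The argument is essentially formal once the factorization $M_{n,k,d}=\pi\circ M_{n,k,d+1}$ is in place, so I do not expect a genuine obstacle. The only point deserving a moment of care is checking that $\pi$ maps the order-$(d+1)$ secant variety \emph{onto} (and not merely into) the order-$d$ one; but this is immediate from the factorization, since every point of $\operatorname{Sec}^H_k(\mathcal{G}_{n,d})$ has the form $M_{n,k,d}(\theta)=\pi\bigl(M_{n,k,d+1}(\theta)\bigr)$. One could equally run the argument on the cumulant side: using Lemma \ref{lemma:phi}, it reduces to $\dim C^0_{n,k,d}\le\dim C^0_{n,k,d+1}$, proved by the analogous truncation $\mathbb{A}^{K,3}_{n,d+1}\to\mathbb{A}^{K,3}_{n,d}$.
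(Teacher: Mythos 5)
Your proof is correct and rests on exactly the same key observation as the paper's own proof: the factorization $M_{n,k,d} = \pi \circ M_{n,k,d+1}$ through the truncation map $\pi\colon \mathbb{A}^M_{n,d+1}\to\mathbb{A}^M_{n,d}$. The paper concludes by noting that fibers of the composite map contain the corresponding fibers of $M_{n,k,d+1}$, whereas you conclude by comparing dimensions of the images; in view of \eqref{homoscedasticfiber} these are equivalent phrasings of the same argument.
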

	\begin{proof}
		By definition, the fiber dimension $\Delta^H_{n,k,d}$ is the dimension of a general nonempty fiber of the moment map $M_{n,k,d}\colon \Theta^H_{n,k,d} \to  \mathbb{A}^M_{n,d}$. However, this map is the composition of the map $M_{n,k,d+1}\colon \Theta^H_{n,k,d} \to \mathbb{A}^M_{n,d+1}$ and the projection map $\mathbb{A}^M_{n,d+1}\to \mathbb{A}^M_{n,d}$, that forgets the moments of order $d+1$, so the conclusion follows.
	\end{proof}
	
	\begin{remark}\label{rmk:ident}
	Since Gaussian mixtures are identifiable from finitely many moments (see e.g. \cite{momvar}), the sequence 
	$$\Delta^H_{n,k,1} \geq \Delta^H_{n,k,2} \geq \dots \geq \Delta^H_{n,k,d} \geq \Delta^H_{n,k,d+1} \geq \dots$$ 
	must stabilize at $0$ for some large enough $d$.
	\end{remark}
	
	The following observation is less trivial. It 
	allows a reduction to the case  $n=k-1$. 
	
	\begin{proposition}\label{prop:equalitynk-2}
		Suppose that $d\geq 3$ and $n\geq k-1$, then
		\begin{equation}
		\Delta^H_{n,k,d} = \Delta^H_{k-1,k,d}.
		\end{equation}
	\end{proposition}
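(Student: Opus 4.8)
The plan is to work entirely on the cumulant side. By Lemma~\ref{lemma:phi}, the fiber dimension satisfies $\Delta^H_{n,k,d} = (n+1)(k-1) - \dim C^0_{n,k,d}$ and $\Delta^H_{k-1,k,d} = k(k-1) - \dim C^0_{k-1,k,d}$. Since $(n+1)(k-1) - k(k-1) = (n-k+1)(k-1)$, the proposition is equivalent to the dimension identity
\begin{equation}
\dim C^0_{n,k,d} = \dim C^0_{k-1,k,d} + (n-k+1)(k-1).
\end{equation}
I would first record that $GL(n,\mathbb{R})$ acts on $\Theta^0_{n,k}$ by $g\cdot(\mu_i,\lambda_i) = (g\mu_i,\lambda_i)$ (this preserves the centering condition $\sum_i\lambda_i\mu_i=0$) and on $\mathbb{A}^{K,3}_{n,d}$ by the substitution $u\mapsto g^tu$ from Section~\ref{sec:two}, so that $\phi_{n,k,d}$ is equivariant. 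The decisive numerical clue is that $(n-k+1)(k-1) = \dim\operatorname{Gr}(k-1,n)$, which points to a fibration over a Grassmannian.

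The geometric core is that, because $n\geq k-1$, a general centered configuration $A\in\Theta^0_{n,k}$ has $k$ means spanning a linear subspace $W\subseteq\mathbb{R}^n$ of dimension exactly $k-1$. Assigning to $A$ this span defines a dominant map $\Theta^0_{n,k}\dashrightarrow \operatorname{Gr}(k-1,n)$ whose general fiber is a copy of $\Theta^0_{k-1,k}$ (the configurations supported on $W$), consistent with $(n+1)(k-1) = \dim\operatorname{Gr}(k-1,n) + k(k-1)$. For a configuration supported on $W$, the generating function $K_A(u)=\log\sum_i\lambda_i e^{u^t\mu_i}$ depends on $u$ only through its restriction to $W$, so its order $\geq 3$ part lies in the subspace $j_W(\mathbb{A}^{K,3}_{k-1,d})\subseteq \mathbb{A}^{K,3}_{n,d}$ of cumulants involving only $W$-directions, and the closure of the image of the fiber is a linearly embedded copy of $C^0_{k-1,k,d}$. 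Thus $C^0_{n,k,d}$ is swept out by the $\operatorname{Gr}(k-1,n)$-family $\{j_W(C^0_{k-1,k,d})\}_W$.

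The main obstacle, and the only place the hypothesis $d\geq 3$ is used, is to show this sweep does not overcount, i.e.\ that a general point $c\in C^0_{n,k,d}$ lies on a single subspace $j_W(\mathbb{A}^{K,3}_{k-1,d})$, so that $W$ is recovered from $c$. Here I would isolate the degree-three part: a short computation with $\log\bigl(\sum_i\lambda_i e^{u^t\mu_i}\bigr)$, using the centering $\sum_i\lambda_i\mu_i=0$ to kill the quadratic contribution from $-\tfrac12(\cdot)^2$, shows that the order-three cumulant of $A$ equals the cubic form $\tfrac16\sum_i\lambda_i(u^t\mu_i)^3$. For $k$ means in general position this cubic is expressible in the $(k-1)$-dimensional space of linear forms spanned by the $u^t\mu_i$ and in no smaller one, so $W$ is read off as the minimal linear-forms subspace needed to write $\kappa_3(c)$. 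Hence the incidence variety $\{(W,c): c\in j_W(C^0_{k-1,k,d})\}$ maps generically finitely (in fact birationally) onto $C^0_{n,k,d}$, yielding
\begin{equation}
\dim C^0_{n,k,d} = \dim\operatorname{Gr}(k-1,n) + \dim C^0_{k-1,k,d} = (n-k+1)(k-1) + \dim C^0_{k-1,k,d},
\end{equation}
which is the desired identity. Equivalently, in fiber language, one shows the general fiber of $\phi_{n,k,d}$ contains only configurations supported on the recovered subspace $W$, hence is isomorphic to a general fiber of $\phi_{k-1,k,d}$. The delicate point is precisely this genericity claim, that the span of the means, and nothing smaller, is detected by the cumulants of order three and higher; it requires $d\geq 3$, and it fails when $n<k-1$ since then $k$ points can no longer span a $(k-1)$-dimensional subspace.
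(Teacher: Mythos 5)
Your route is genuinely different from the paper's, and its skeleton is sound: the reduction via Lemma~\ref{lemma:phi}, the identification $(n-k+1)(k-1)=\dim\operatorname{Gr}(k-1,n)$, the $GL(n,\RR)$-equivariance making all the slices $j_W(C^0_{k-1,k,d})$ isomorphic, and the equivalence of the proposition with $\dim C^0_{n,k,d}=\dim\operatorname{Gr}(k-1,n)+\dim C^0_{k-1,k,d}$ are all correct. By contrast, the paper argues infinitesimally rather than globally: it fixes a general point with $L_i=u_i$ for $i<k$, extracts from $d\kappa_3$ the relation $\sum_i h_iL_i^2=0$ with $h_i=3\lambda_iH_i+\varepsilon_iL_i$, and shows by comparing monomials that any kernel vector of $d\phi_{n,k,d}$ involves only $u_1,\dots,u_{k-1}$, so $\ker d\phi_{n,k,d}=\ker d\phi_{k-1,k,d}$. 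Both proofs ultimately rest on the same phenomenon --- the third cumulant detects the span of the means --- yours at the level of points, the paper's at the level of tangent vectors.

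However, there is a genuine gap at precisely the step you flag as ``the delicate point'': the claim that for a general centered configuration the cubic $\kappa_3=\tfrac16\sum_i\lambda_i(u^t\mu_i)^3$ cannot be written in fewer than $k-1$ variables is asserted, not proved, and it is the entire mathematical content of the proposition --- everything else in your argument is formal dimension bookkeeping. It really does need an argument: sums of $k$ cubes of forms spanning a $(k-1)$-dimensional space can lie in $\operatorname{Sym}^3$ of a strictly smaller subspace for special configurations (for $k=2$, $\lambda_1=\lambda_2=\tfrac12$ gives $\kappa_3\equiv0$), so genericity must be exhibited, not invoked. The gap is fillable along these lines: the minimal subspace of linear forms needed to express a cubic $F$ is spanned by its second-order partial derivatives (apolarity/essential-variables lemma); for $F=\sum_i\lambda_iL_i^3$ these derivatives span $\{\sum_i\lambda_i q(\mu_i)L_i \mid q\in\operatorname{Sym}^2(\RR^n)^{*}\}$, and for a general centered configuration the quadratic Veronese images $v_2(\mu_1),\dots,v_2(\mu_k)$ are linearly independent (take $\mu_i=e_i$ for $i<k$ and $\mu_k=-\sum_{i<k}(\lambda_i/\lambda_k)e_i$, whose Veronese image has nonzero mixed coordinates), so evaluation of quadrics at the $\mu_i$ is surjective and the derivatives span all of $\operatorname{span}(L_1,\dots,L_k)$; the case $k=2$ is a direct check using $\lambda_1\neq\lambda_2$. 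With that lemma supplied, your incidence-variety count closes and the proposition follows.
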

	\begin{proof}
		Use Lemma \ref{lemma:phi}, which says that the fiber dimension $\Delta^H_{n,k,d}$ is equal to the fiber dimension of the map
		\begin{equation}
		\phi_{n,k,d}\colon \Theta^0_{n,k} \to \mathbb{A}_{n,d}^{K,3}.
		\end{equation}
		This dimension can be computed by looking at the differential of the map at a general point. The parameter space is defined  as
		\begin{equation}
		\Theta^0_{n,k}=\left\{  ((\lambda_1,\dots,\lambda_k),(L_1,\dots,L_k)) \in \mathbb{R}^{k}\times V^k \,|\, \sum_{i=1}^k \lambda_i=1, \, \sum_{i=1}^k \lambda_iL_i = 0 \right\}.
		\end{equation}
		Let $p=((\lambda_1,\dots,\lambda_k),(L_1,\dots,L_k)) \in \Theta^0_{n,k}$ be a general point. Then the tangent space to $\Theta^{0}_{n,k}$ at the point is given by
		\begin{equation*}
		T_p\Theta^0_{n,k} = \left\{ ((\varepsilon_1,\dots,\varepsilon_k),(H_1,\dots,H_k)) \in  \mathbb{R}^k\times V^k \,|\, \sum_{i=1}^k\varepsilon_i = 0, \, \sum_{i=1}^k (\varepsilon_iL_i + \lambda_iH_i) = 0 \right\}.
		\end{equation*}
		The fiber dimension of $\phi_{n,k,d}$ coincides with the dimension of the kernel of the differential $d\phi_{n,k,d}$ at the general point $p$. In particular, since the point is general and $n\geq k-1$, we can suppose that $L_i=u_i$ for $i=1,\dots,k-1$ and that all the $\lambda_i$ are nonzero. In particular $L_k$ is a linear combination of $u_1,\dots,u_{k-1}$. Now, we claim that if $((\varepsilon_1,\dots,\varepsilon_k),(H_1,\dots,H_k))$ is in the kernel of $d\phi_{n,k,d}$ then the only variables appearing in the $H_i$ are $u_1,\dots,u_{k-1}$. If this is true, then we are done, because the kernel of $d\phi_{n,k,d}$ coincides with the kernel of $d\phi_{k-1,k,d}$ at the point $((\lambda_1,\dots,\lambda_k),(L_1,\dots,L_k)) \in \Theta^0_{k-1,k}$
		
		To prove the claim, observe that the map is given by the cumulant functions $\phi_{n,k,d}=(\kappa_3,\kappa_4,\dots,\kappa_d)$, so the kernel of $d\phi_{n,k,d}$ equals the intersection of the kernels of the $d\kappa_i$ for $i=3,...,d$. Therefore it is enough to prove the analogous claim for the kernel of the differential $d\kappa_3$ of $\kappa_3$.  Since the first moment is zero by construction, the third cumulant coincides with the third moment
		\begin{equation}
		\kappa_3 = \lambda_1L_1^3+\dots+\lambda_kL_K^3.
		\end{equation}
		Hence the differential is the linear map 
		\begin{equation}
		d\kappa_{3,p} \colon T_p\Theta^0_{n,k} \to \mathbb{A}^{K,3}_{n,d}, \qquad
		((\varepsilon_1,\dots,\varepsilon_k),(H_1,\dots,H_k)) \mapsto \sum_{i=1}^k(3\lambda_iH_i+\varepsilon_iL_i)L_i^2
		\end{equation}
		and if $((\varepsilon_1,\dots,\varepsilon_k),(H_1,\dots,H_k))$ is in the kernel, then it must be that
		\begin{equation}
		\sum_{i=1}^k h_i L_i^2=0, \qquad \text{where } h_i = 3\lambda_iH_i+\varepsilon_iL_i.
		\end{equation}
		Since $\lambda_k\ne 0$, this is equivalent to $\sum_{i=1}^k h_i(\lambda_kL_i)^2 = 0$ and since $\lambda_1L_1+\dots+\lambda_kL_k=0$, we see that
		\begin{align*}
		\sum_{i=1}^k h_i(\lambda_kL_i)^2 &= \sum_{i=1}^{k-1}h_i(\lambda_kL_i)^2 + h_k(\lambda_kL_k)^2 =  \sum_{i=1}^{k-1}h_i(\lambda_kL_i)^2 + h_k\left(\sum_{i=1}^{k-1}\lambda_iL_i\right)^{2} \\
		& = \sum_{i=1}^{k-1}(\lambda_k^2h_i+\lambda_i^2h_k)L_i^2 + 2h_k \left( \sum_{1 \leq i < j \leq k-1} \lambda_i\lambda_j L_iL_j \right). 
		\end{align*}
		By assumption $L_i=u_i$ for $i=1,\dots,k-1$, so this last expression is equal to zero if and only if 
		\begin{equation}
		\sum_{i=1}^{k-1}(\lambda_k^2h_i + \lambda_i^2h_k) u_i^2 =- 2h_k \left( \sum_{1 \leq i < j \leq k-1} \lambda_i\lambda_j u_iu_j \right).
		\end{equation}
		If this is true, then $h_k$ uses only the variables $u_1,\dots,u_{k-1}$. Indeed, if some other variable, say $y$, appears in $h_k$ then on the right hand side there is the monomial $yu_1u_2$, while there is no such a monomial on the left hand side. Likewise, if the variable $y$ appears in one of the $h_i$ for $i=1,\dots,k-1$: then on the left hand side there would be a monomial of the form $yu_i^2$, while there is no such monomial on the right hand side.
		
		Hence, the $h_i$ are polynomials in the $u_1,\dots,u_k$, and, by definition of the $h_i$, it follows that the same holds for the $H_i$. This proves the claim and the result follows.
	\end{proof}
	
	
	\subsection{Moments up to order $d=3$}
	
	When $d=3$ we determine the defect ${\delta}^H_{n,k,3}$ and the fiber dimension ${\Delta}^H_{n,k,3}$ of the map $${\phi}_{n,k,3}:\Theta^0_{n,k}\to \mathbb{A}^{K,3}_{n,3}$$ for each $n$ and $k$, and use Lemma \ref{lemma:phi}. 
	When $d=3$, the space $\mathbb{A}^{K,3}_{n,3}$ is identified with the space $\operatorname{Sym}^3V$ of homogeneous polynomials of degree three, and as noted in the proof of Proposition \ref{prop:equalitynk-2}, the third cumulants coincide with the third moments, so that: 
	\begin{equation}
	\phi_{n,k,3}\colon \Theta^0_{n,k} \to \operatorname{Sym}^3V \qquad ((L_1,\dots,L_k),(\lambda_1,\dots,\lambda_k)) \mapsto \lambda_1L_1^3+\dots+\lambda_kL_k^3.
	\end{equation}
	We compute the closure $C^0_{n,k,3}$ of the image.
	\begin{lemma}\label{lemma:d3image}
		The set $C^0_{n,k,3}$ is the Zariski closure of 
		\begin{equation}
		\{ H_1(u)^3 + \dots + H_k(u)^3 \,|\, H_1(u),\dots,H_k(u) \in \mathbb{R}^n \text{ linearly dependent }  \}.
		\end{equation}	
	\end{lemma}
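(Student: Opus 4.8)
The plan is to establish the two inclusions of Zariski closures separately, using the substitution $H_i := \lambda_i^{1/3}L_i$ that converts a weighted cube into a genuine cube. Write $S$ for the set appearing in the statement, so that the claim is $C^0_{n,k,3} = \overline{S}$, and recall that by definition (as in Lemma \ref{lemma:cone}) we have $C^0_{n,k,3} = \overline{\phi_{n,k,3}(\Theta^0_{n,k})}$, where $\phi_{n,k,3}$ sends a tuple $((L_1,\dots,L_k),(\lambda_1,\dots,\lambda_k))$ satisfying $\sum_i \lambda_i = 1$ and $\sum_i\lambda_i L_i = 0$ to $\sum_i \lambda_i L_i^3 \in \operatorname{Sym}^3 V$.

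First I would prove $\phi_{n,k,3}(\Theta^0_{n,k}) \subseteq S$. Given a point of $\Theta^0_{n,k}$, set $H_i := \lambda_i^{1/3}L_i$ using real cube roots (and $H_i := 0$ when $\lambda_i = 0$). Then $\sum_i H_i^3 = \sum_i \lambda_i L_i^3$ is exactly the image point, while the centering condition rewrites as $0 = \sum_i \lambda_i L_i = \sum_i \lambda_i^{2/3} H_i$. Since $\sum_i\lambda_i = 1$ forces some $\lambda_i \neq 0$, this is a nontrivial linear relation, so the $H_i$ are linearly dependent and the image point lies in $S$. Taking closures gives $C^0_{n,k,3} \subseteq \overline{S}$.

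For the reverse inclusion I would invert the substitution on a dense subset. The linearly dependent $k$-tuples form an irreducible variety $W$ (a determinantal locus when $k\le n$, and all of $V^k$ when $k > n$), and a generic tuple in $W$ admits a dependence relation $\sum_i c_i H_i = 0$ all of whose coefficients are nonzero (a generic element of its space of relations). Setting $\lambda_i := (\alpha c_i)^{3/2}$ and $L_i := \lambda_i^{-1/3}H_i$, with $\alpha$ chosen so that $\sum_i \lambda_i = 1$, one checks directly that $\sum_i\lambda_i = 1$, that $\sum_i\lambda_i L_i = \alpha\sum_i c_i H_i = 0$, and that $\sum_i\lambda_i L_i^3 = \sum_i H_i^3$. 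Hence a dense subset of $S$ lies in $\phi_{n,k,3}(\Theta^0_{n,k})$, which by irreducibility of $W$ yields $\overline{S}\subseteq C^0_{n,k,3}$ and completes the argument.

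The hard part will be making this inversion rigorous over $\mathbb{R}$: the quantity $\lambda_i^{2/3}$ is automatically nonnegative, so a real relation with mixed-sign coefficients $c_i$ cannot be matched by real weights, and in any case the roots $(\alpha c_i)^{3/2}$ and $\lambda_i^{-1/3}$ together with the normalization (which needs $\sum_i c_i^{3/2}\neq 0$ for a suitable choice of roots) require care. I would resolve this by carrying out the inversion after complexifying the parameters, where all roots exist and the weights are unconstrained in sign, obtaining equality of the complex Zariski closures; since $\phi_{n,k,3}$, the cubing map on $W$, and all defining conditions are defined over $\mathbb{R}$ while both $\Theta^0_{n,k}$ and $W$ have Zariski-dense real points, the real Zariski closures agree as well. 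Finally, non-generic dependent tuples, namely those whose dependence is not of full support, need no separate treatment, since they already lie in the closure of the generic locus handled above.
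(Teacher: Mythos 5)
Your proof is correct, and the two directions split the same way as in the paper, but your treatment of the harder (converse) inclusion is genuinely different. The forward inclusion is the paper's argument almost verbatim: the substitution $H_i = \lambda_i^{1/3}L_i$ turns the centering condition $\sum_i \lambda_i L_i = 0$ into the dependence relation $\sum_i \lambda_i^{2/3}H_i = 0$ (your version is in fact slightly cleaner, since it does not need to assume $\lambda_i > 0$). For the converse, the paper stays entirely over $\mathbb{R}$: it restricts to tuples whose dependence relation $H_k = -\beta_1 H_1 - \dots - \beta_{k-1}H_{k-1}$ has \emph{strictly positive} coefficients (a full-dimensional, hence Zariski-dense, family inside the dependent tuples), takes real roots $b_i = \beta_i^{3/2}$, and solves the resulting linear system $(\mathrm{I} + b\,\mathbb{1}^T)\lambda = b$ uniquely via the matrix determinant lemma, checking along the way that the resulting weights $\lambda_i$ are positive and sum to one. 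You instead allow a generic mixed-sign relation $\sum_i c_i H_i = 0$, pass to $\mathbb{C}$ to extract the roots $(\alpha c_i)^{3/2}$, and descend to real Zariski closures using that $\Theta^0_{n,k}$ and $W$ are irreducible with Zariski-dense real points and that all maps are defined over $\mathbb{R}$; this descent is standard and sound (note also that for $c_i \neq 0$ some choice of signs always makes $\sum_i \pm c_i^{3/2} \neq 0$, since vanishing for all sign patterns would force every $c_i = 0$). The trade-off: the paper's positivity trick buys more than the lemma itself, namely the refinement recorded in the remark immediately after it, that the image of the statistically meaningful positive part $\Theta^{0,+}_{n,k}$ is exactly the set of sums of cubes of \emph{positively} dependent forms — information your complexification route discards. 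Your route, in exchange, avoids the sign bookkeeping and the explicit linear system, at the cost of having to handle the real-versus-complex closure formalism carefully, which you flag and resolve correctly.
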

	\begin{proof}
		Recall that $$\Theta^0_{n,k} = \{ ((L_1,\dots,L_k),(\lambda_1,\dots,\lambda_k)) \in V^{k} \times \mathbb{R}^{k-1} \,|\, \lambda_1+\dots+\lambda_k =1,\,\, \lambda_1L_1+\dots+\lambda_kL_k = 0  \}.$$ To compute the Zariski closure, suppose that all the $\lambda_i$ are strictly positive, so that in particular we can write
		\begin{equation}
		L_k  = -\frac{\lambda_1}{\lambda_k}L_1 - \dots -\frac{\lambda_{k-1}}{\lambda_k}L_{k-1}. 
		\end{equation}
		Since cubic roots are well defined over $\mathbb{R}$, 
		\begin{small}
		\begin{align*}
		\lambda_1&L_1^3 + \dots +\lambda_kL_k^3  = \lambda_1L_1^3 + \dots +\lambda_{k-1}L_{k-1}^3 - \lambda_k\left(\frac{\lambda_1}{\lambda_k}L_1 + \dots +\frac{\lambda_{k-1}}{\lambda_k}L_{k-1}\right)^3 \\
		& = H_1^3 + \dots + H_{k-1}^3 + H_k^3
		\end{align*}
		\end{small}
		where $H_i := \sqrt[3]{\lambda_i} L_i$ for $i=1,\dots,k-1$, and $H_k := -\sum_{i=1}^{k-1} \left(\frac{\sqrt[3]{\lambda_i}}{\sqrt[3]{\lambda_k}}\right)^2 H_i$, using the equality $\sqrt[3]{\lambda_k}\frac{\lambda_i}{\lambda_k}=\left(\frac{\sqrt[3]{\lambda_i}}{\sqrt[3]{\lambda_k}}\right)^2\sqrt[3]{\lambda_i}$.  In particular, this shows immediately that $\lambda_1L_1^3+\dots+\lambda_kL_k^3$ can be written as a sum of cubic powers of linearly dependent linear forms. 
		
		For the converse, let $H_1,\dots,H_k$ be linearly dependent linear forms.  For the Zariski closure, it suffices to assume that $H_k = -\beta_1H_{1}-\dots-\beta_{k-1}H_{k-1}$ for some general $\beta_1,\dots,\beta_{k-1} \in \mathbb{R}$ strictly positive. So we want to write 
		\begin{equation}\label{eq:eqbeta}
		\beta_ i  = \left(\frac{\sqrt[3]{\lambda_i}}{\sqrt[3]{\lambda_k}}\right)^2
		\end{equation}
		for some positive $\lambda_1,\dots,\lambda_{k}\in \mathbb{R}$ such that $\lambda_1+\dots+\lambda_k=1$. Given such $\lambda_i$, the above computations yields
		\begin{equation}
		H_1^3+\dots+H_k^3 = \lambda_1L_1^3+\dots+\lambda_kL_k^3,
		\end{equation}
		where $L_i = \frac{1}{\sqrt[3]{\lambda_i}}H_i$ for $i=1,\dots,k-1$ and $L_k = -\frac{\lambda_1}{\lambda_k}L_1-\dots-\frac{\lambda_{k-1}}{\lambda_k}L_{k-1}$, so that $\lambda_1L_1+\dots+\lambda_kL_k = 0$, as wanted.
		
		To conclude, it remains to show that the equations \eqref{eq:eqbeta} have a solution: these equations are equivalent to 
		\begin{equation}\label{eq:cuberoots}
		(\sqrt{\beta_i})^3 = \frac{\lambda_i}{1-\lambda_1-\dots-\lambda_{k-1}} \qquad  \text{ for } i=1,\dots,k-1.
		\end{equation}
		Observe that the square roots are well defined since $\beta_i>0$ for all $i=1,\dots,k-1$. Moreover, if $(\lambda_1,\dots,\lambda_{k-1})$ is a solution to \eqref{eq:cuberoots}, then it is easy to see that all the $\lambda_i$ must be strictly positive: indeed, since the $\beta_i$ are positive, $\lambda_i$ and $1-\lambda_1-\dots-\lambda_{k-1}$ have the same sign. Thus, if one of the $\lambda_i$ is negative, then all the $\lambda_i$ are negative, but then $1-\lambda_1-\dots-\lambda_{k-1}>0$ which is absurd. 
		
		Now, setting $b_i = \sqrt{\beta_i}^3$, rewrite the equations as the linear system
		\begin{equation}\label{eq:syst}
		\begin{pmatrix}
		1+b_1 & b_1 & b_1 & \dots & b_1 \\
		b_2 & 1+b_2 & b_2 & \dots & b_2 \\
		b_3 & b_3 & 1+b_3 & \dots & b_3 \\
		\vdots & \vdots & \vdots & \ddots & \vdots \\
		b_{k-1} & b_{k-1} & b_{k-1} & \dots & 1+b_{k-1}
		\end{pmatrix}
		\begin{pmatrix}
		\lambda_1 \\ \lambda_2 \\ \lambda_3 \\ \vdots \\ \lambda_{k-1}
		\end{pmatrix}
		=
		\begin{pmatrix}
		b_1 \\ b_2 \\ b_3 \\ \vdots \\ b_{k-1}
		\end{pmatrix}.
		\end{equation}
		The matrix determinant lemma gives that $\det(\mathrm{I}+ b\cdot \mathbb{1}^T) = 1 + \mathbb{1}^T b$ = $1 + b_1+\dots+b_{k-1}$, which is positive since the $\beta_i$ are positive. This means the system \eqref{eq:syst} has a unique solution.
	\end{proof}
	
	\begin{remark}
		The proof of Lemma \ref{lemma:d3image} actually gives more: indeed, it shows that the image of the positive part
		\begin{equation}
		\Theta^{0,+}_{n,k} = \{ ((L_1,\dots,L_k),(\lambda_1,\dots,\lambda_k)) \in \Theta_{n,d}^0 \,|\, \lambda_i > 0 \text{ for all } i=1,\dots,k \},
		\end{equation}
		which is the one relevant in statistics,
		coincides with the set of sums $\{ H_1(u)^3+\dots+H_k(u)^3\}$, where the $H_i$ are positively linearly dependent, meaning that there are coefficients $\beta_1,\dots,\beta_k>0$ such that
		\begin{equation}
		\beta_1H_1+ \dots + \beta_{k}H_{k} = 0.
		\end{equation}
	\end{remark}
	
	\begin{remark}\label{rmk:d3cone}
		The set of sums of cubes of $k$ dependent linear forms has a natural interpretation in terms of the projective Veronese variety: indeed consider the third Veronese embedding of $\mathbb{P}(V)=\mathbb{P}^{n-1}$:
		\begin{equation}
		v_{3}\colon  \mathbb{P}(V) \hookrightarrow \mathbb{P}(\operatorname{Sym}^3V), \qquad [L] \mapsto [L^3].
		\end{equation} 
		For each $(k-2)$-dimensional linear subspace $\Pi \subseteq \mathbb{P}^{n-1}$ let $\operatorname{Sec}_k(v_3(\Pi)) \subseteq \mathbb{P}(\operatorname{Sym}^3V)$ be the $k$-th secant variety of its image $v_3(\Pi)$. Then, by Lemma \ref{lemma:d3image}, the variety $C^0_{n,k,3}$ is the affine cone over the union of these secants:
		\begin{equation}
		C^0_{n,k,3} = \operatorname{Cone}\left(\overline{\bigcup_{\Pi\subseteq \mathbb{P}^{n-1}} \operatorname{Sec}^k(v_3(\Pi))}\right).
		\end{equation}  
	\end{remark} 
	
	We compute the dimension of this variety, dividing it in the cases $k\leq n+1$ and $k\geq n+1$:

	\begin{proposition}\label{prop:d3}
	\begin{enumerate}[i)]
	    \item \label{lemma:d3case1}
		If $k\geq n+1$, then 
		\begin{equation}
		\dim C^0_{n,k,3} = \min\left\{ kn , \binom{n+2}{3} \right\}
		\end{equation}
		except in the case $n=5,k=7$, where $\dim C^0_{5,7,3} = 34$.
	    \item \label{lemma:d3case2}
		If $k\leq n+1$, then 
		\begin{equation}
		\Delta^H_{n,4,3} = 2,  \qquad
		\Delta^H_{n,3,3} = 2, \qquad 
		\Delta^H_{n,2,3} = 1. 
		\end{equation}	
		and when $k\geq 5$,
		\begin{equation}
		\Delta^H_{n,k,3} = 0.
		\end{equation}
	\end{enumerate}
	\end{proposition}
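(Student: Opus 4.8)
The plan is to reduce both parts to the classical Alexander-Hirschowitz theorem by recognizing $C^0_{n,k,3}$ as a secant variety of a cubic Veronese. For part \ref{lemma:d3case1}, the key observation is that when $k \geq n+1$ we have $k > \dim V = n$, so \emph{any} $k$ linear forms in $V$ are automatically linearly dependent. Hence the linear dependence condition in Lemma \ref{lemma:d3image} becomes vacuous, and $C^0_{n,k,3}$ is just the Zariski closure of all sums $H_1^3 + \dots + H_k^3$ with $H_i \in V$. By Remark \ref{rmk:d3cone} (taking $\Pi = \mathbb{P}^{n-1}$), this is precisely the affine cone over the $k$-th secant variety of the Veronese $v_3(\mathbb{P}^{n-1}) \subseteq \mathbb{P}(\operatorname{Sym}^3 V)$.

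With this identification, I would apply Alexander-Hirschowitz to $v_3(\mathbb{P}^{n-1})$. The expected affine dimension of the $k$-th secant is $\min\{kn, \binom{n+2}{3}\}$, since each of the $k$ cone points contributes $n$ parameters and $\dim \operatorname{Sym}^3 V = \binom{n+2}{3}$. Alexander-Hirschowitz guarantees this value is attained for all $n,k$ except the single defective cubic case $v_3(\mathbb{P}^4)$ at the seventh secant; in the indexing of Remark \ref{rmk:d3cone} with $\dim V = n$ this is exactly $n=5,k=7$, where $kn = \binom{7}{3} = 35$ and the defect of $1$ gives $\dim C^0_{5,7,3} = 34$. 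The passage from the complex statement of Alexander-Hirschowitz to the real dimension is harmless, since cube roots exist over $\mathbb{R}$ and the real sums of cubes are Zariski dense in the complex secant, as already exploited in Lemma \ref{lemma:d3image}.

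For part \ref{lemma:d3case2}, I would bootstrap from part \ref{lemma:d3case1}. Since $k \leq n+1$ is equivalent to $n \geq k-1$, Proposition \ref{prop:equalitynk-2} applies and yields $\Delta^H_{n,k,3} = \Delta^H_{k-1,k,3}$, reducing every case to the boundary situation $n = k-1$. By Lemma \ref{lemma:phi}, $\Delta^H_{k-1,k,3} = \dim \Theta^0_{k-1,k} - \dim C^0_{k-1,k,3} = k(k-1) - \dim C^0_{k-1,k,3}$. Now $k = (k-1)+1$, so part \ref{lemma:d3case1} applies with $n$ replaced by $k-1$; since the relation $k = n+1$ never meets the exceptional pattern $k = n+2$ of the case $n=5,k=7$, I obtain $\dim C^0_{k-1,k,3} = \min\{k(k-1), \binom{k+1}{3}\}$ with no exceptions.

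The final step is a short numerical comparison. Writing $\binom{k+1}{3} = \tfrac{(k+1)k(k-1)}{6}$, one sees that $k(k-1) \leq \binom{k+1}{3}$ precisely when $k \geq 5$. Thus for $k \geq 5$ the dimension of $C^0_{k-1,k,3}$ equals $k(k-1)$ and $\Delta^H_{n,k,3} = 0$, while for $k = 2,3,4$ the binomial is the smaller term and $\Delta^H_{n,k,3} = k(k-1) - \binom{k+1}{3}$ evaluates to $1,2,2$ respectively. I do not anticipate a serious obstacle: the whole argument is an assembly of Lemma \ref{lemma:d3image}, Remark \ref{rmk:d3cone}, Proposition \ref{prop:equalitynk-2}, Lemma \ref{lemma:phi}, and the classical classification. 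The only point demanding genuine care is the bookkeeping of indexing conventions, so that the unique defective cubic secant $v_3(\mathbb{P}^4)$ is correctly matched to $n=5,k=7$ in part \ref{lemma:d3case1}.
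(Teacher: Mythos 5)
Your proposal is correct, and while part i) coincides with the paper's own argument, your part ii) takes a genuinely different route. For i), both you and the paper note that linear dependence of $k\geq n+1$ forms is automatic, so by Lemma \ref{lemma:d3image} and Remark \ref{rmk:d3cone} the variety $C^0_{n,k,3}$ is the affine cone over $\operatorname{Sec}_k(v_3(\mathbb{P}^{n-1}))$, and Alexander--Hirschowitz gives the dimension, with the cubic exception $v_3(\mathbb{P}^4)$, $k=7$ translating correctly to $n=5$, $k=7$. For ii), the paper, after the same reduction to $n=k-1$ via Proposition \ref{prop:equalitynk-2}, splits into cases: for $k=2,3,4$ it reads the values $1,2,2$ off Table \ref{tab:defective3} (i.e., from direct computation), while for $k\geq 5$ it performs an explicit tangent-space computation, showing the differential of $\phi_{k-1,k,3}$ is injective at the special point $\lambda_i=\tfrac{1}{k}$, $L_i=u_i$, $L_k=-u_1-\dots-u_{k-1}$, using that $h_a+h_b+h_c=0$ for all triples forces $h=0$ and that $u_1^2,\dots,u_{k-1}^2$ admit no linear syzygies. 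You instead observe that the boundary case $k=n+1$ is already covered by part i) (and that the exceptional pair has $k=n+2$, so it never interferes), whence $\dim C^0_{k-1,k,3}=\min\left\{k(k-1),\binom{k+1}{3}\right\}$ unconditionally; Lemma \ref{lemma:phi} then reduces everything to the comparison of $k(k-1)$ with $\binom{k+1}{3}=\tfrac{(k+1)k(k-1)}{6}$, i.e., $\Delta^H_{k-1,k,3}=\max\left\{0,\tfrac{k(k-1)(5-k)}{6}\right\}$, which yields $1,2,2$ for $k=2,3,4$ and $0$ for $k\geq 5$, matching the statement. Your argument is uniform in $k$, avoids both the computer-assisted table and the kernel computation, and exposes ii) as a formal consequence of i) together with the reduction lemma; the paper's version, on the other hand, is elementary and independent of the Alexander--Hirschowitz classification for $k\geq 5$, so it serves as a self-contained consistency check and exhibits an explicit point of injectivity of the differential. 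Both are valid; yours is arguably the shorter proof the paper could have written.
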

	\begin{proof}
	i) Since $k\geq n+1$,  Remark \ref{rmk:d3cone} shows that $C^0_{n,k,3}$ is the cone over the $k$-th secant variety $\operatorname{Sec}_k(v_3(\mathbb{P}^{n-1}))$. The dimension of this variety is computed by the Alexander-Hirschowitz theorem, so that
	\begin{equation}
	\dim C^0_{n,k,3} = \min\left\{ kn,\binom{n+2}{3} \right\}
	\end{equation}
	with the single exception of $n=5,k=7$, where the dimension is one less than the expected, hence $\dim C^0_{5,7,3} = 34$.	
	
	ii) Since $k\leq n+1$, Proposition \ref{prop:equalitynk-2} shows that $\Delta^H_{n,k,3} = \Delta^H_{k-1,k,3}$. Hence, for $k=2,3,4$ we see directly from Table \ref{tab:defective3} that
		\begin{equation}
		\Delta^H_{3,4,3}=2, \qquad \Delta^H_{2,3,3} =2 ,\qquad \Delta^H_{1,2,3}=1.    
		\end{equation}
	For $k\geq 5$ instead, follow the proof of Proposition \ref{prop:equalitynk-2} and show that the differential of $\phi_{k-1,k,3}\colon \Theta^0_{k-1,k,3} \to \operatorname{Sym}^3 V$ at a general point is injective.  For this, consider the kernel of the differential at a point $p=((\lambda_1,\dots,\lambda_k),(L_1,\dots,L_k))$. It consists of elements $((\varepsilon_1,\dots,\varepsilon_k),(H_1,\dots,H_k)) \in \mathbb{R}^k\times V^k$ such that $\varepsilon_1+\dots+\varepsilon_k = 0, \varepsilon_1L_1+\dots+\varepsilon_kL_k + \lambda_1H_1+\dots+\lambda_k H_k = 0$ and
	\begin{equation}
	\sum_{i=1}^{k-1}\ell_i L_i^2 + 2h\left( \sum_{1\leq i<j\leq k-1} \lambda_i\lambda_j L_i L_j \right) = 0,
	\end{equation}
	where $\ell_i = \lambda_k^2(3\lambda_iH_i+\varepsilon_iL_i)+\lambda_i^2(3\lambda_kH_k+\varepsilon_kL_k)$ and $h=3\lambda_k H_k+\varepsilon_kL_k$. Now choose the specific point $p$ given by  $\lambda_i = \frac{1}{k}$ for each $i=1,\dots,k$, $L_i = u_i$ for $i=1,\dots,k-1$ and $L_k = -u_1-\dots-u_{k-1}$. Then the above equation becomes 
	\begin{equation}\label{eq:d3relationdifflast}
	\sum_{i=1}^{k-1}\ell_i u_i^2 +\frac{2}{k^2} \cdot h\cdot\left( \sum_{1\leq i<j\leq k-1} u_i u_j \right) = 0.
	\end{equation}
	Let us write $h=h_1u_1+\dots+h_{k-1}u_{k-1}$. Then, in \eqref{eq:d3relationdifflast},  the coefficient of $u_au_bu_c$ is $\frac{2}{k^2}(h_a+h_b+h_c)$ for all $1\leq a<b<c \leq k-1$. Hence
	\begin{equation}
	h_a+h_b+h_c = 0, \qquad \text{ for all } 1\leq a<b<c \leq k-1.
	\end{equation}
	Let $1\leq a < b < c< d\leq k-1$ be any four distinct indices between $1$ and $k-1$. Then the previous equations translate into the linear system
	\begin{equation}
	\begin{pmatrix} 
		1 & 1 & 1 & 0 \\
		1 & 1 & 0 & 1 \\
		1 & 0 & 1 & 1 \\
		0 & 1 & 1 & 1
	\end{pmatrix}
	\begin{pmatrix}
	h_a \\ h_b \\ h_c \\ h_d
	\end{pmatrix}
	= 0.
	\end{equation}
	The matrix appearing in the linear system is invertible, so $h_a=h_b=h_c=h_d=0$. Since this holds for an arbitrary choice of four distinct indices, it follows that $h=0$. Now, relation \eqref{eq:d3relationdifflast} tells us that $\sum_{i=1}^{k-1}u_i^2 \ell_i = 0$, but since $u_1^2,\dots,u_{k-1}^2$ form a complete intersection of quadrics, they do not have linear syzygies, which implies that $\ell_i=0$ for each $i$. From the definitions of $\ell_i$ and $h$, it follows that $3\lambda_iH_i+\varepsilon_iL_i=0$ for each $i$ but then the other two relations $\sum_i \varepsilon_i=0$ and $\sum_i (\lambda_iH_i+\varepsilon_iL_i)=0$ imply that $H_i=0,\varepsilon_i=0$ for all $i$, which is what was needed.
	\end{proof}
	
		Now we are ready for a complete classification of defectivity when $d=3$:
\newpage
	
	\begin{table}[h!tb]
    \begin{minipage}{.5\linewidth}
      \centering
        \begin{tabular}{ | c | c | c | c | c | c | c | c | c |} \hline  $n$ & $k$ & $d$ & par & $N$ & exp & $\dim$ & $\delta$ & $\Delta$\\ 
				\hline 1 & 1 & 3 & 2 & 3 & 2& 2 & 0 & 0 \\
				\hline 1 & 2 & 3 & 4 & 3 & 3& 3 & 0 & 1 \\
				\hline 2 & 2 & 3 & 8 & 9 & 8& 7 & 1 & 1 \\
				\hline 2 & 3 & 3 & 11 & 9 & 9& 9 & 0 & 2 \\
				\hline 3 & 2 & 3 & 13 & 19 & 13& 12 & 1 & 1 \\
				\hline 3 & 3 & 3 & 17 & 19 & 17& 15 & 2 & 2 \\
				\hline 3 & 4 & 3 &21 & 19 & 19& 19 & 0 & 2 \\
				\hline 4 & 2 & 3 & 19 & 34 & 19 & 18 & 1 & 1 \\
				\hline 4 & 3 & 3 & 24 & 34 & 24 & 22 & 2 & 2 \\
				\hline 4 & 4 & 3 & 29 & 34 & 29 & 27 & 2 & 2 \\
				\hline 4 & 5 & 3 & 34 & 34 & 34 & 34 & 0 & 0 \\
				\hline 5 & 2 & 3 & 26 & 55 & 26 & 25 & 1 & 1 \\
				\hline 5 & 3 & 3 & 32 & 55 & 32 & 30 & 2 & 2 \\
				\hline 5 & 4 & 3 & 38 & 55 & 38 & 36 & 2 & 2 \\
				\hline 5 & 5 & 3 & 44 & 55 & 44 & 44 & 0 & 0 \\
				\hline 5 & 6 & 3 & 50 & 55 & 50 & 50 & 0 & 0 \\
				\hline 5 & 7 & 3 & 56 & 55 & 55 & 54 & 1 & 2 \\
				\hline \end{tabular}
    \end{minipage}%
    \begin{minipage}{.5\linewidth}
      \centering
        \begin{tabular}{ | c | c | c | c | c | c | c | c | c |} \hline  $n$ & $k$ & $d$ & par & $N$ & exp & $\dim$ & $\delta$ & $\Delta$\\ 
             	\hline 6 & 2 & 3 & 34 & 83 & 34 & 33 & 1 & 1 \\
				\hline 6 & 3 & 3 & 41 & 83 & 41 & 39 & 2 & 2 \\
				\hline 6 & 4 & 3 & 48 & 83 & 48 & 46 & 2 & 2 \\
				\hline 6 & 5 & 3 & 55 & 83 & 55 & 55 & 0 & 0 \\
				\hline 6 & 6 & 3 & 62 & 83 & 62 & 62 & 0 & 0 \\
				\hline 6 & 7 & 3 & 69 & 83 & 69 & 69 & 0 & 0 \\
				\hline 6 & 8 & 3 & 76 & 83 & 76 & 75 & 1 & 1 \\
				\hline 6 & 9 & 3 & 83 & 83 & 83 & 81 & 2 & 2 \\
				\hline 7 & 2 & 3 & 43 & 119 & 43 & 42 & 1 & 1 \\
				\hline 7 & 3 & 3 & 51 & 119 & 51 & 49 & 2 & 2 \\
				\hline 7 & 4 & 3 & 59 & 119 & 59 & 57 & 2 & 2 \\
				\hline 7 & 5 & 3 & 67 & 119 & 67 & 67 & 0 & 0 \\
				\hline 7 & 6 & 3 & 75 & 119 & 75 & 75 & 0 & 0 \\
				\hline 7 & 7 & 3 & 83 & 119 & 83 & 83 & 0 & 0 \\
				\hline 7 & 8 & 3 & 91 & 119 & 91 & 91 & 0 & 0 \\
				\hline 7 & 9 & 3 & 99 & 119 & 99 & 98 & 1 & 1 \\
				\hline 7 & 10 & 3 & 107 & 119 & 107 & 105 & 2 & 2 \\
				\hline 7 & 11 & 3 & 115 & 119 & 115 & 112 & 3 & 3 \\
				\hline 7 & 12 & 3 & 123 & 119 & 119 & 119 & 0 & 4 \\
				\hline
        \end{tabular}
    \end{minipage} 
    \caption{\label{tab:defective3}  All instances of defective varieties $\operatorname{Sec}^H_{k}(\mathcal{G}_{n,3})$ for $n=1,\dots, 7$ with $d=3$. The column \textit{`par'} denotes the number of parameters and \textit{`exp'} the expected dimension.} \medskip
\end{table}

	\begin{theorem}\label{thm:degree3}
		For $d=3$, the defect $\delta^H_{n,k,3}=0$ for any $k$ and $n$,  with the following exceptions:
		\begin{itemize}
		    \item $n\geq k$ and $k=2$, where $\delta^H_{n,2,3}=1$. 
			\item $n\geq k$ and $k=3,4$, where $\delta^H_{n,k,3}=2$.
			\item $n=5$ and $k=7$, where $\delta^H_{5,7,3}=1$. 
			\item $n\geq 4$ and $n+1 < k \leq \frac{n^2+2n+6}{6}$ where $\delta^H_{n,k,3} = k-n-1$.
			\item $n\geq 4$ and $\frac{n^2+2n+6}{6} \leq k < \frac{n^2+3n+2}{6}$ where $\delta^H_{n,k,3} = n\left( \frac{n^2+3n+2}{6} -k \right)$.
		\end{itemize}
	\end{theorem}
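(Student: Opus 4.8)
The plan is to combine the dimension count of Proposition \ref{prop:d3} with the fiber-dimension identity of Lemma \ref{lemma:phi} and the definition of the defect, splitting the argument into the two regimes $k\leq n+1$ and $k\geq n+1$. Throughout I abbreviate the parameter count $P:=\dim\Theta^H_{n,k}=(n+1)(k+\tfrac{n}{2})-1$ and the moment-space dimension $N:=\dim\mathbb{A}^M_{n,3}=\binom{n+3}{3}-1$, so that by definition $\delta^H_{n,k,3}=\Delta^H_{n,k,3}-\max\{P-N,\,0\}$. The first thing I would record is the identity
\begin{equation*}
P-N=(n+1)\left(k-\tfrac{n^2+2n+6}{6}\right),
\end{equation*}
obtained by a direct expansion, so that $P\leq N$ precisely when $k\leq\tfrac{n^2+2n+6}{6}$. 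This threshold is exactly where the two range-formulas in the statement switch.

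For the regime $k\leq n+1$ I would invoke Proposition \ref{prop:d3}(ii) directly, which gives $\Delta^H_{n,k,3}=1,2,2$ for $k=2,3,4$ and $\Delta^H_{n,k,3}=0$ for $k\geq 5$; in the last case $\delta^H_{n,k,3}=0$ follows at once. For $k\in\{2,3,4\}$ a one-line quadratic estimate shows that $P\leq N$ holds exactly when $n\geq k$, so on that range $\delta^H_{n,k,3}=\Delta^H_{n,k,3}$ equals $1$ (for $k=2$) and $2$ (for $k=3,4$), which gives the first two bullets. On the complementary range $n<k$ (which forces $k=n+1$) one has $P>N$, and subtracting $P-N$ cancels the fiber dimension so that $\delta^H_{n,k,3}=0$.

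For the regime $k\geq n+1$ I would feed the dimension formula of Proposition \ref{prop:d3}(i) into Lemma \ref{lemma:phi}, obtaining
\begin{equation*}
\Delta^H_{n,k,3}=(k-1)(n+1)-\min\left\{kn,\ \binom{n+2}{3}\right\}
\end{equation*}
away from the Alexander--Hirschowitz exception. When $kn\leq\binom{n+2}{3}$, i.e. $k\leq\tfrac{n^2+3n+2}{6}$, the minimum equals $kn$ and the right-hand side collapses to $\Delta^H_{n,k,3}=k-n-1$. Comparing with the threshold above splits this sub-case: for $n+1<k\leq\tfrac{n^2+2n+6}{6}$ we have $P\leq N$ and hence $\delta^H_{n,k,3}=k-n-1$, while for $\tfrac{n^2+2n+6}{6}\leq k<\tfrac{n^2+3n+2}{6}$ we have $P>N$, and substituting $P-N$ yields, after simplification, $\delta^H_{n,k,3}=n\bigl(\tfrac{n^2+3n+2}{6}-k\bigr)$; the two formulas agree at the common integer endpoint. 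When instead $kn>\binom{n+2}{3}$ the cone $C^0_{n,k,3}$ fills $\operatorname{Sym}^3V$, so $\dim C^0_{n,k,3}=\binom{n+2}{3}$, and substituting this together with $P-N$ into the defect makes every term cancel, giving $\delta^H_{n,k,3}=0$. The single exception $n=5,\,k=7$ is then treated by hand: there $\dim C^0_{5,7,3}=34$ forces $\Delta^H_{5,7,3}=2$ while $P-N=1$, hence $\delta^H_{5,7,3}=1$.

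The step I expect to be the main obstacle is the algebraic verification that the defect vanishes identically in the sub-case $kn>\binom{n+2}{3}$ and reduces to the clean expression $n\bigl(\tfrac{n^2+3n+2}{6}-k\bigr)$ in the intermediate range; everything else is a careful but routine matching of the inequalities defining the ranges against the threshold $k=\tfrac{n^2+2n+6}{6}$. One should also note that the ranges in the last two bullets are vacuous for $n\leq 5$ and first contain an integer $k$ at $n=6$, so the statement is trivially true for $4\leq n\leq 5$; cross-checking the resulting formulas against Table \ref{tab:defective3} for $n\leq 7$ provides a useful sanity check on the arithmetic.
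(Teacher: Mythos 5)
Your proposal is correct and follows essentially the same route as the paper's own proof: both feed Proposition \ref{prop:d3} into Lemma \ref{lemma:phi}, split into the regimes $k\leq n+1$ (resp.\ $n\geq k$) and $k\geq n+1$, compare against the thresholds $\frac{n^2+2n+6}{6}$ and $\frac{n^2+3n+2}{6}$, and treat the Alexander--Hirschowitz exception $n=5,k=7$ by hand. Your explicit identity $P-N=(n+1)\bigl(k-\tfrac{n^2+2n+6}{6}\bigr)$ and the observation that $\Delta^H_{n,k,3}=P-N$ exactly when $C^0_{n,k,3}$ fills $\operatorname{Sym}^3V$ are just a cleaner packaging of the max-expression arithmetic the paper carries out.
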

	
	
	\begin{proof}
		First consider the case when $n\geq k$: then Proposition \ref{prop:d3} \ref{lemma:d3case2} applies.
		It is straightforward to check that $\delta^H_{n,k,3} = \Delta^H_{n,k,d}$, from which the statement of the theorem follows.
		
		For the cases where $k\geq n+1$, start with the exceptional case $n=5,k=7$:  Proposition \ref{prop:d3}  \ref{lemma:d3case1} gives that $\dim \overline{\phi_{n,k,3}(\Theta^0_{5,7})} = 34$, and the Lemma \ref{lemma:phi}  yields $\Delta^H_{5,7,3} = 2$ and $\delta^H_{5,7,3} = 1$.	
		
		Now, consider the other cases: Proposition \ref{prop:d3} \ref{lemma:d3case1} gives that
		\begin{equation}
		\dim \overline{\phi_{n,k,3}(\Theta_{n,k})} = \min\left\{ nk , \binom{n+2}{3} \right\}
		\end{equation}
		and then Lemma \ref{lemma:phi} shows that 
		\begin{small}
		\begin{equation*}
		\Delta^H_{n,k,3} = (k-1)(n+1) -  \min\left\{ nk , \binom{n+2}{3} \right\} = \max \left\{ k-n-1, k-n-1 + n\left( k- \frac{n^2+3n+2}{6}\right)  \right\}
		\end{equation*}
		\end{small}
		so that
		\begin{small}
		\begin{equation*}
		\delta^H_{n,k,3} = \max \left\{ k-n-1, k-n-1 + n\left( k- \frac{n^2+3n+2}{6}\right)  \right\} - \max\left\{ 0, (n+1)\left( k - \frac{n^2+2n+6}{6} \right)  \right\}.
		\end{equation*}
		\end{small}
		Suppose first that $n=1,2,3$: this implies $k\geq n+1 \geq \frac{n^2+2n+6}{6} \geq \frac{n^2+3n+2}{6}$ so that
		\begin{equation}
		\delta^H_{n,k,3} = k-n-1 + n\left( k- \frac{n^2+3n+2}{6}\right) - (n+1)\left( k - \frac{n^2+2n+6}{6} \right) = 0.
		\end{equation} 
		Now, suppose that $n\geq 4$. Then  $5\leq n+1\leq \frac{n^2+2n+6}{6} \leq \frac{n^2+3n+2}{6}$ and there are three possibilities for $k$: if $k\geq \frac{n^2+3n+2}{6}$, then  
		\begin{equation}
		\delta^H_{n,k,3} = k-n-1 + n\left( k- \frac{n^2+3n+2}{6}\right) - (n+1)\left( k - \frac{n^2+2n+6}{6} \right) = 0.
		\end{equation}
		If instead $\frac{n^2+2n+6}{6}\leq k < \frac{n^2+3n+2}{6}$, then 
		\begin{equation}
		\delta^H_{n,k,3} = k-n-1-(n+1)\left( k-\frac{n^2+2n+6}{6} \right) = n\left( \frac{n^2+3n+2}{6} -k \right)
		\end{equation}
		which is strictly positive. Finally, if $n+1\leq k < \frac{n^2+2n+6}{6}$ the defect is
		\begin{equation}
		\delta^H_{n,k,3} = k-n-1,
		\end{equation}
		which is positive if and only if $k>n+1$. 
	\end{proof}
	
	As a consequence, identifiability can be characterized whenever $k\leq n+1$:
	
	\begin{theorem}\label{thm:classificationkn+1}
	 Suppose $k\leq n+1$. If $k\geq 5$ then a general homoscedastic mixture is algebraically identifiable from moments up to order $3$. If instead $k=2,3,4$ then a general homoscedastic mixture is algebraically identifiable from the moments up to order $d=4$.
	\end{theorem}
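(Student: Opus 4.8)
The plan is to split along the two ranges of $k$ in the statement, reusing the order-three computations together with the reduction of Proposition \ref{prop:equalitynk-2}. For $k\geq 5$ (and $k\leq n+1$) there is nothing left to prove: Proposition \ref{prop:d3} \ref{lemma:d3case2} already gives $\Delta^H_{n,k,3}=0$, which by Definition \ref{defs} is exactly algebraic identifiability from moments up to order three. So the whole content of the theorem is concentrated in the cases $k=2,3,4$.

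For $k=2,3,4$ the same Proposition \ref{prop:d3} \ref{lemma:d3case2} gives $\Delta^H_{n,k,3}\in\{1,2,2\}>0$, so order three genuinely fails and we must pass to $d=4$. Lemma \ref{lemma:monotondelta} guarantees $\Delta^H_{n,k,4}\leq\Delta^H_{n,k,3}$, so it is enough to establish $\Delta^H_{n,k,4}=0$; and since $d=4\geq 3$ and $n\geq k-1$, Proposition \ref{prop:equalitynk-2} collapses this to the three base cases $\Delta^H_{1,2,4}$, $\Delta^H_{2,3,4}$, $\Delta^H_{3,4,4}$. By Lemma \ref{lemma:phi} each of these vanishes precisely when the differential of $\phi_{k-1,k,4}=(\kappa_3,\kappa_4)$ is injective at a general point of $\Theta^0_{k-1,k}$, where (the first moment being zero by construction)
\begin{equation*}
\kappa_3 = \sum_{i=1}^k \lambda_i L_i^3, \qquad
\kappa_4 = \sum_{i=1}^k \lambda_i L_i^4 - 3\Bigl(\sum_{i=1}^k \lambda_i L_i^2\Bigr)^{2}.
\end{equation*}

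I would run this injectivity check at the explicit point $\lambda_i=1/k$, $L_i=u_i$ for $i<k$, $L_k=-\sum_{i<k}u_i$ already used in the proof of Proposition \ref{prop:d3}. The kernel of $d\phi_{k-1,k,4}$ is the intersection $\ker d\kappa_3\cap\ker d\kappa_4$, and the order-three analysis has already located $\ker d\kappa_3$, which is only $1$-, $2$-, $2$-dimensional for $k=2,3,4$ (this is exactly why four distinct indices were unavailable there). The remaining task is to substitute a parametrization of this small leftover kernel into the degree-four differential $d\kappa_4$ and check that the resulting linear conditions force the tangent vector to vanish. The main obstacle is precisely this concrete rank verification for $k=4$ ($n=3$), where the $12$-dimensional tangent space maps into $\operatorname{Sym}^3(\RR^3)\oplus\operatorname{Sym}^4(\RR^3)$ of dimension $25$ and one must confirm full column rank $12$. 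My expectation is that the degree-four equations — absent at order three — supply exactly the extra constraints needed to annihilate the one- or two-dimensional residual kernel. For $k=2$ the check is immediate, as source and target are both two-dimensional, and for $k=3$ it reduces to a small explicit Jacobian verifiable by hand or symbolically.
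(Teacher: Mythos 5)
Your overall strategy is the same as the paper's: for $k\geq 5$ the claim is immediate from the order-three results, and for $k=2,3,4$ you invoke Proposition \ref{prop:equalitynk-2} (valid since $d=4\geq 3$) to collapse everything to the three base cases $(n,k)=(1,2),(2,3),(3,4)$ at $d=4$, which are then to be settled by a finite verification. The paper performs exactly this verification as a direct computation in \texttt{Macaulay2}; you propose instead to check injectivity of $d\phi_{k-1,k,4}=(d\kappa_3,d\kappa_4)$ at the symmetric point $\lambda_i=1/k$, $L_i=u_i$, $L_k=-u_1-\dots-u_{k-1}$. That route is legitimate in principle (full rank of the differential at one smooth point forces full generic rank, hence $\Delta^H_{k-1,k,4}=0$), and your expression $\kappa_4=\sum_i\lambda_iL_i^4-3\bigl(\sum_i\lambda_iL_i^2\bigr)^2$ is correct up to harmless constant factors.

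The gap is that the decisive computation is never carried out: for $k=2,3,4$ this rank verification \emph{is} the theorem, and you offer only the ``expectation'' that $d\kappa_4$ annihilates the residual kernel. Two supporting claims are also not right as stated. First, the assertion that $\ker d\kappa_3$ at your chosen point is $1$-, $2$-, $2$-dimensional conflates the \emph{generic} fiber dimension (Table \ref{tab:defective3}) with the kernel at a \emph{special} point; semicontinuity of rank only guarantees the kernel there is at least that large, and your point is maximally symmetric, hence the most likely place for extra degeneration. Indeed, already for $k=2$ one has $f_3(1/2)=0$, so at this point $d\kappa_3$ kills the entire $L$-direction and injectivity survives only because $f_4(1/2)=-\tfrac{1}{12}\neq 0$; if an analogous accident for $k=3$ or $k=4$ made $\ker d\kappa_3\cap\ker d\kappa_4$ nonzero at this particular point, your check would be inconclusive (not a disproof), and a different, less symmetric point would be needed. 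Second, for $k=2$ the check is not ``immediate'' because source and target are both two-dimensional: equality of dimensions never yields injectivity of a differential, and one must still verify that the $2\times 2$ Jacobian determinant, proportional to $3f_3f_4'-4f_3'f_4$, is not identically zero. As written, then, your proposal proves the $k\geq 5$ half and gives a plausible, paper-consistent plan for $k=2,3,4$, but the content of that half remains asserted rather than established.
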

    \begin{proof}
    	When $k\geq 5$ this follows immediately from Theorem \ref{thm:degree3} and Lemma \ref{lemma:monotondelta}. If instead $k=2,3,4$, thanks to Proposition \ref{prop:equalitynk-2}, it is enough to set $n=k-1$ and check the first $d$ for which we have identifiability: these are a finite number of cases that can be done by direct computation (e.g. in \texttt{Macaulay2} \cite{M2}), and we find that such a $d$ is 4.
    \end{proof} 
	
	\subsection{Mixtures with $k=2$ components}
	
	When $k=2$ we characterize the rational identifiability as well. Since the case $d=3$ is already covered, consider only $d\geq 4$.
	
	\begin{theorem}\label{theorem:theoremk2}
		The homoscedastic secant $\operatorname{Sec}^H_{2}(\mathcal{G}_{n,4})$ is algebraically identifiable. If $d\geq 5$, the homoscedastic secant $\operatorname{Sec}^H_2(\mathcal{G}_{n,d})$ is also rationally identifiable. 
	\end{theorem}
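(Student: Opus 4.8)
The plan is to reduce both statements to an explicit computation in the univariate case $n=1$. The key structural fact is that for $k=2$ the centering constraint $\lambda_1\mu_1+\lambda_2\mu_2=0$ forces the two means to be collinear, both lying on a single line $\langle v\rangle\subseteq\mathbb{R}^n$ through the origin. Writing $L=u^tv$, the Dirac part therefore has cumulant generating function $K_A(u)=g(L)$, where $g(w)=\log(\lambda_1e^{\mu_1w}+\lambda_2e^{\mu_2w})$ is the univariate CGF; hence every homogeneous component of $\phi_{n,2,d}$ is a scalar multiple of a single power $L^j$, namely $\kappa_j=\tilde\kappa_j L^j$ for $j\ge 3$, with $\tilde\kappa_j$ the univariate cumulants. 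This collinearity is what makes the reduction to $n=1$ available throughout.

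For the first assertion (algebraic identifiability at $d=4$) I would invoke Proposition \ref{prop:equalitynk-2}, which gives $\Delta^H_{n,2,4}=\Delta^H_{1,2,4}$, so it suffices to check $\Delta^H_{1,2,4}=0$. Setting $\lambda_1=t$, $\lambda_2=1-t$ and eliminating $\mu_2$ by centering, one computes $\kappa_3$ and $\kappa_4$ directly and forms the scale-invariant combination $P(\tau)=\kappa_3^4/\kappa_4^3=\frac{\tau(1-4\tau)^2}{(1-6\tau)^3}$, where $\tau=t(1-t)$. Since $P$ is a non-constant rational function of $\tau$, the image of $\phi_{1,2,4}$ is two-dimensional; as $\dim\Theta^0_{1,2}=2$, the general fiber is zero-dimensional, that is $\Delta^H_{1,2,4}=0$.

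For the second assertion (rational identifiability for $d\ge 5$), because $M_{n,2,d}$ factors through $M_{n,2,5}$ it is enough to treat $d=5$ and show that the general fiber is a single label-swap orbit. For generic parameters $\kappa_3=\tilde\kappa_3 L^3$ is a nonzero perfect cube, which recovers the line $[v]$ and reduces the problem to $n=1$ along that line. There I would use, besides $P(\tau)$, the second scale-invariant $Q(\tau)=\kappa_3\kappa_5/\kappa_4^2=\frac{(1-4\tau)(1-12\tau)}{(1-6\tau)^2}$. The defining relation coming from $Q$ is quadratic in $\tau$ and the one from $P$ is cubic; eliminating the higher powers of $\tau$ between them expresses $\tau$ as a rational function of $P$ and $Q$, so $\tau$ is determined uniquely by the image point. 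Then $t$ is one of the two roots $t,1-t$ of $x^2-x+\tau=0$, and $\mu_1$ is the unique real cube root determined by $\mu_1^3=\kappa_3(1-t)^2/\bigl(t(1-2t)\bigr)$. A short computation shows that the root $1-t$ produces exactly the swapped parameters $\bigl(1-t,\,-\tfrac{t\mu_1}{1-t}\bigr)$, so the two solutions form one orbit and rational identifiability follows.

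The main obstacle is the elimination step showing that $\tau\mapsto(P(\tau),Q(\tau))$ is birational onto its image; the collinearity reduction, the cube-root recovery of the scale, and the orbit bookkeeping are then routine. I would also need to confirm that the loci excluded by the argument—where $\kappa_3=0$, where $\tau\in\{1/4,1/6,1/12\}$, or where the denominator of the elimination vanishes—are non-generic, which is immediate.
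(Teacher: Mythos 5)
Your proposal follows essentially the same route as the paper's proof: pass to cumulants and the centered Dirac part (Lemma \ref{lemma:phi}, Remark \ref{remark:estimationcumulants}), use the fact that for $k=2$ every cumulant of order $j\geq 3$ is a scalar multiple of $L^j$ for a single linear form $L$, recover $L$ up to scale from $\kappa_3$ by real cube roots, and reduce rational identifiability to showing that the swap-invariant quantity $\tau=\lambda(1-\lambda)$ (the paper's $\gamma$) is determined by scale-invariant rational functions of the cumulants. The differences are in execution. For $d=4$, your dimension count (the image of $\phi_{1,2,4}$ is $2$-dimensional because $P=\kappa_3^4/\kappa_4^3$ is a nonconstant function of $\tau$, combined with Proposition \ref{prop:equalitynk-2} and Lemma \ref{lemma:phi}) is self-contained, whereas the paper derives finiteness of the fibers from the computation behind Theorem \ref{thm:classificationkn+1} and then exhibits the fiber explicitly as the roots of the cubic \eqref{eq:equationa} in $\gamma$. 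For $d=5$ you choose the pair $(P,Q)$ with $Q=\kappa_3\kappa_5/\kappa_4^2$, while the paper uses $f_4^3/f_3^4$ and $f_5^3/f_3^5$ and then checks generic injectivity of the associated map to $\mathbb{P}^2$ computationally, verifying with \texttt{Macaulay2} that its image is a plane curve of degree $7$. (Minor slips on your side: your $P$ and $Q$ are missing the constants $32/3$ and $4/5$, and it is $M_{n,2,5}$ that factors through $M_{n,2,d}$ for $d\geq 5$, not conversely; both are harmless.)

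The one step you leave open --- birationality of $\tau\mapsto(P(\tau),Q(\tau))$ --- is a genuine gap as written, and it is exactly the step the paper settles by computer algebra; but for your choice of invariants it closes with a one-line argument requiring no computation. In lowest terms $P$ has degree $3$ and $Q$ has degree $2$ as rational maps $\mathbb{P}^1\to\mathbb{P}^1$. If $e$ denotes the degree of the induced map from $\mathbb{P}^1$ onto the image curve $C\subseteq\mathbb{P}^1\times\mathbb{P}^1$, then $e$ divides both $\deg P=3$ and $\deg Q=2$, since degrees multiply under composition with the projections restricted to $C$; hence $e=1$ and the map is birational. Equivalently, for generic $\tau_0$ the quadratic and cubic relations you write down have exactly one common root, so the linear remainder in the Euclidean division of the cubic by the quadratic is not identically zero along the curve and solves for $\tau$ rationally in $(P,Q)$. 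With that inserted, your remaining bookkeeping (the pair $\{t,1-t\}$ giving exactly the label swap, and $\mu_1$ fixed by the unique real cube root of $\kappa_3(1-t)^2/\bigl(t(1-2t)\bigr)$) completes the proof, and arguably yields a cleaner, computation-free alternative to the paper's final step.
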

	\begin{proof}
		By Lemma \ref{lemma:phi} and Remark \ref{remark:estimationcumulants}, it is enough to consider the parameter space given by $\Theta^0_{n,2} = \{ ((L_1,L_2),(\lambda_1,\lambda_2)) \,|\, \lambda_1+\lambda_2=1, \lambda_1L_1+\lambda_2L_2 = 0 \}$ and the map
		\begin{equation}
		\phi_{n,2,d}\colon \Theta^0_{n,2} \to C^0_{n,2,d} \subseteq \mathbb{A}^{K,3}_{n,d}. 
		\end{equation}	
		In order to compute the general fiber of this map, note that since $d\geq 4$, it follows from Theorem \ref{thm:classificationkn+1} and its proof that the map has finite fibers. Hence, it is enough to restrict a general fiber to the open subset $\lambda_2\ne 0$. There we may assume  $L_2 = -\frac{\lambda_1}{\lambda_2}L_1 = -\frac{\lambda_1}{1-\lambda_1}L_1$. We thus compute the fibers of the induced map
		\begin{equation}
		F_{n,2,d}\colon V \times (\mathbb{R}\setminus \{1\}) \to \mathbb{A}^{K,3}_{n,d}, \qquad (L,\lambda) \mapsto \phi_{n,2,d}\left((\lambda,1-\lambda),\left(L, -\frac{\lambda}{1-\lambda}L\right)\right).
		\end{equation}
		In explicit terms, this map is given by the terms from degree 3 to degree $d$ of the logarithm $\log(\lambda e^{L}+(1-\lambda)e^{-\frac{\lambda}{\lambda-1}L})$. A computation shows that the first terms are:
		\begin{equation*}
		\log(\lambda e^{L}+(1-\lambda)e^{-\frac{\lambda}{\lambda-1}L}) = f_3(\lambda)L^3 + f_4(\lambda)L^4 + f_5(\lambda)L^5 + \dots
		\end{equation*}
		\begin{equation}
		f_3(\lambda) = \frac{\lambda(1-\lambda)(1-2\lambda)}{6(1-\lambda)^3}, \qquad f_4(\lambda) = \frac{\lambda(1-\lambda)(1-6\lambda(1-\lambda))}{24(1-\lambda)^4},
		\end{equation} 
		\begin{equation*}
		f_5(\lambda) = \frac{\lambda(1-\lambda)(1-2\lambda)(1-12\lambda(1-\lambda))}{120(1-\lambda)^5}.    
		\end{equation*}
		Now suppose that $d=4$, and let $L\in  V$ and $ \lambda \in \mathbb{R}\setminus \{1\}$ be general elements. In fact, it is enough to assume $L\ne 0$ and $\lambda\ne 0,1,\frac{1}{2}$, so that $\kappa_3 = f_3(\lambda)L^3 \ne 0$. In order to compute the fiber of the point $(\kappa_3,\kappa_4) = F_{n,2,4}(L,\lambda)$, first observe that  $\kappa_3=f_3(\lambda_0)L_0^3 = (\sqrt[3]{f_3(\lambda_0)}L_0)^3$ and that the polynomial $L_0:=\sqrt[3]{f_3(\lambda)}L$ can be computed explicitly: from the expression
		\begin{equation}
		\kappa_3 = \kappa_{300..0}u_1^3+\kappa_{030..0}u_2^3+\dots+\kappa_{00..03}u_n^3 + (\text{ terms with mixed monomials })
		\end{equation}
		then one obtains
		\begin{equation}
		L_0 = \sqrt[3]{\kappa_{300..0}}\cdot u_1+\sqrt[3]{\kappa_{030..0}}\cdot u_2+\dots+\sqrt[3]{\kappa_{00..03}}\cdot u_n.
		\end{equation} 
		
		In particular, $L=f_3(\lambda)^{-\frac{1}{3}}L_0$, so that the equation $\kappa_4 = f_4(\lambda)L^4$ translates into $\frac{f_4(\lambda)}{f_3(\lambda)^{\frac{4}{3}}} = \frac{\kappa_4}{L_0^4}$. Observe that $a := \frac{\kappa_4}{L_0^4}$ is a constant that can be computed explicitly by comparing a single nonzero coefficient of $L_0^4$ with the corresponding coefficient of $\kappa_4$: for example, if $\sqrt[3]{\kappa_{300..0}} \ne 0$, then 
		\begin{equation}
		a = \frac{\kappa_{400..0}}{(\sqrt[3]{\kappa_{300..0}})^4}.
		\end{equation}
		Now, the equation $\frac{f_4(\lambda)}{f_3(\lambda)^{\frac{4}{3}}} = a$ is equivalent to $\frac{f_4(\lambda)^3}{f_3(\lambda)^4} = a^3$, or more explicitly
		\begin{equation} 
		\frac{3}{32} \cdot \frac{(1-6\lambda(1-\lambda))^3}{\lambda(1-\lambda)(1-4\lambda(1-\lambda))^2} = a^3.
		\end{equation}
		Note that this expression is invariant under exchanging $\lambda$ with $1-\lambda$, as is expected from the symmetry of the situation. Hence, set $\gamma := \lambda(1-\lambda)$ and rewrite this expression as 
		\begin{equation}\label{eq:equationa}
		\frac{3}{32}\cdot \frac{(1-6\gamma)^3}{\gamma(1-4\gamma)^2}  = a^3.  
		\end{equation}
		This is a cubic equation with three possible solutions for $\gamma$, which means there is no rational identifiability. In order to get such, consider also the cumulants $\kappa_5$ of order 5: this adds the data $\kappa_5$ and the condition $\kappa_5 = f_5(\lambda)L^5$. In the above notation  $L=f_3(\lambda)^{-\frac{1}{3}}L_0$, so that the condition $\kappa_5 = f_5(\lambda)L^5$ becomes $\frac{f_5(\lambda)}{f_3(\lambda)^{\frac{5}{3}}} = \frac{\kappa_5}{L_0^5}$. As before, we see that $b := \frac{\kappa_5}{L_0^5}$ is a constant that can be computed explicitly by comparing a single nonzero coefficient of $L_0^5$ with the corresponding coefficient of $\kappa_5$: for example, if $\sqrt[3]{\kappa_{300..0}} \ne 0$, then 
		\begin{equation}
		b = \frac{\kappa_{500..0}}{(\sqrt[3]{\kappa_{300..0}})^5}.
		\end{equation}
		Now, the equation $\frac{f_5(\lambda)}{f_3(\lambda)^{\frac{5}{3}}} = a$ is equivalent to $\frac{f_5(\lambda)^3}{f_3(\lambda)^5} = b^3$, or more explicitly, as above, with the substitution $\gamma = \lambda(1-\lambda)$, 
		\begin{equation}\label{eq:equationb}
		\frac{15}{128}\cdot \frac{(1-6\gamma)^5}{\gamma(1-\gamma)^3(1-12\gamma)} = b^3.
		\end{equation}
		Hence, rational identifiability is obtained if the two equations \eqref{eq:equationa} and \eqref{eq:equationb} have a unique common solution $\gamma$. This means that the map $\mathbb{R}\dashrightarrow \mathbb{R}^2, \gamma \mapsto (g(\gamma),h(\gamma))$ is generically injective. This map extends to 
		\begin{equation*}
		\mathbb{\mathbb{R}} \to \mathbb{P}^2, \, \, \left[ \frac{3}{32}(1-6\gamma)^3(1-\gamma)^3(1-12\gamma), \frac{15}{128}(1-6\gamma)^5(1-4\gamma)^2,\gamma(1-4\gamma)(1-\gamma)^3(1-12\gamma) \right],    
		\end{equation*}
		i.e. a map defined by polynomials of degree $7$.   It is generically injective if and only if the closure of its image is a plane curve of degree $7$. This can be verified with \texttt{Macaulay2} \cite{M2}: the resulting curve is given by the equation
		\begin{multline*}
		849346560x^5y^2-679477248x^4y^3-29491200x^5yz+2674483200x^4y^2z-2439217152x^3y^3z\\ +256000x^5z^2 +79744000x^4yz^2+2415168000x^3y^2z^2-2616192000x^2y^3z^2\\ +499500000x^2y^2z^3-406500000xy^3z^3+474609375y^3z^4 = 0. 
		\end{multline*}	\end{proof}
	
		\begin{figure}[ht]
    \centering
      \includegraphics[scale=0.35]{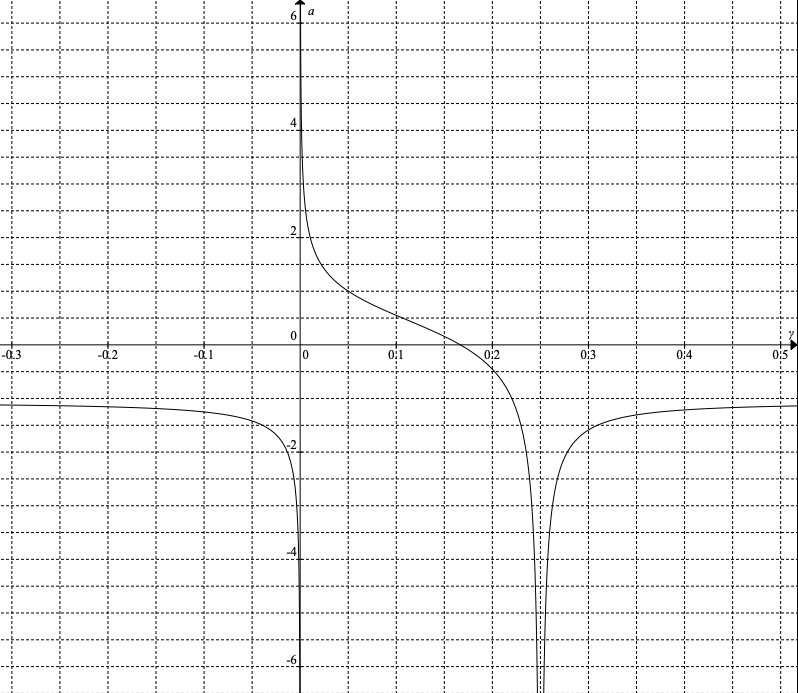}
       \caption{Plot of the real-valued function $a(\gamma)$ in \eqref{eq:reala}}
     \label{fig:plot}
      \end{figure}
       Even though there is no rational identifiability above when $d=4$, it is worth noting that in a purely statistical setting, $\gamma$ can be recovered uniquely, as seen below. 
      
	\begin{corollary}\label{cor:rident}
	    For $k=2$, the statistical mixture parameters can be recovered uniquely with moments up to order $d=4$. 
	    \begin{proof}
		This is equivalent to saying that the equation \eqref{eq:equationa} has a unique statistically relevant solution in $\gamma = \lambda (1- \lambda)$. Note that since $\lambda \in (0,1)\setminus \{\frac12 \}$, we have that $\gamma \in (0, \frac{1}{4})$. Consider the real valued function coming from \eqref{eq:equationa}:
		\begin{equation}\label{eq:reala}
		    a(\gamma)= \frac{\sqrt[3]{3}(1-6\gamma)}{2\sqrt[3]{4\gamma(1-4\gamma)^2}}.  
		\end{equation}
		Its derivative, $a'(\gamma) = -\frac{1}{2\sqrt[3]{36}\gamma(1-4\gamma)\sqrt[3]{4\gamma(1-4\gamma)^2}}$, is always negative for $0<\gamma<\frac{1}{4}$ so that the function $a(\gamma)$ is strictly decreasing and, in particular, injective in this statistically meaningful interval. 
		 The corresponding inverse is given by the cubic equation in $\gamma$
		 \begin{equation}\label{eq:cubic}
		     (256a^3+324)\gamma^3-(128a^3+162)\gamma^2+(16a^3+27)\gamma - \frac{3}{2}\, = \, 0.
		 \end{equation}
		 The discriminant of \eqref{eq:cubic} is $\Delta = -3072a^6(64a^3+81)$. It is zero precisely when $a=-\frac{3\sqrt[3]{3}}{4}$, which corresponds to the horizontal asymptote of $a$. If $a<-\frac{3\sqrt[3]{3}}{4}$, there are 3 real solutions, but one is negative and the other one is larger than $\frac{1}{4}$. The remaining solution is also the unique real solution when $a>-\frac{3\sqrt[3]{3}}{4}$, given explicitly by 
			\begin{equation}\label{eq:cubicsol}
			\gamma = \frac{4a^3}{3 \eta}
			+ \frac{\eta}  {3 (64 a^3 + 81)}+\frac{1}{6}. 
			\end{equation} 
		where $\eta=(-4096 a^9 - 10368 a^6 - 6561 a^3 + 9 \sqrt{262144 a^{15} + 995328 a^{12} + 1259712 a^9 + 531441 a^6} )^{\frac{1}{3}}$.
		\end{proof}
	\end{corollary}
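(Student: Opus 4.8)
The plan is to reduce the recovery of the statistical parameters to the injectivity of a single real-valued function of $\gamma = \lambda(1-\lambda)$ on the relevant interval, and then establish that injectivity by an elementary monotonicity argument. By the proof of Theorem \ref{theorem:theoremk2}, recovering the mixture parameters from the moments up to order $4$ amounts to solving equation \eqref{eq:equationa} for $\gamma$; once $\gamma$ is known, the remaining quantities (the direction $L$, and then the mean and covariance via Remark \ref{remark:estimationcumulants}) are determined. First I would record that the statistical constraints $\lambda \in (0,1)$ together with $\lambda \neq \tfrac12$ (needed so that $\kappa_3 \neq 0$) translate exactly into $\gamma \in (0,\tfrac14)$, since $\gamma = \lambda(1-\lambda)$ sweeps out $(0,\tfrac14)$ as $\lambda$ runs over $(0,1)\setminus\{\tfrac12\}$, with the two preimages $\{\lambda,\,1-\lambda\}$ of a given $\gamma$ accounting precisely for the label-swap symmetry.

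The core step is to show that the real function $a(\gamma)$ in \eqref{eq:reala} is injective on $(0,\tfrac14)$, which I would do by differentiation, verifying that $a'(\gamma)$ has constant sign. The main obstacle is a subtle one: the numerator factor $1-6\gamma$ vanishes at $\gamma=\tfrac16 \in (0,\tfrac14)$, so $a(\gamma)$ itself changes sign inside the interval, and one must rule out an accompanying critical point. The key is that upon differentiating, this factor cancels, leaving
\begin{equation*}
a'(\gamma) = -\frac{1}{2\sqrt[3]{36}\,\gamma(1-4\gamma)\sqrt[3]{4\gamma(1-4\gamma)^2}},
\end{equation*}
whose denominator is a product of strictly positive quantities for $\gamma\in(0,\tfrac14)$, namely $\gamma>0$, $1-4\gamma>0$, and the real cube root of the positive number $4\gamma(1-4\gamma)^2$. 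Hence $a'(\gamma)<0$ throughout, so $a$ is strictly decreasing and therefore injective; this produces a unique $\gamma \in (0,\tfrac14)$ for each attainable value of $a$, and thus unique recovery of the statistical parameters up to relabeling.

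To render the recovery constructive, I would complement the monotonicity argument with an explicit inversion. Clearing denominators in \eqref{eq:equationa} turns it into the cubic \eqref{eq:cubic} in $\gamma$, and computing its discriminant $\Delta = -3072\,a^6(64a^3+81)$ together with a check of which roots lie in $(0,\tfrac14)$ confirms that exactly one statistically admissible root exists and isolates the closed form \eqref{eq:cubicsol}. This algebraic route independently corroborates the injectivity obtained by calculus and furnishes the explicit formula for $\gamma$ in terms of the measured constant $a$, from which $\lambda$, $L$, the mean and the covariance are then recovered as described above.
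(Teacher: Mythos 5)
Your proposal is correct and follows essentially the same route as the paper's proof: the same reduction of unique recovery to solving \eqref{eq:equationa} for $\gamma=\lambda(1-\lambda)\in(0,\tfrac14)$, the same monotonicity argument via the sign of $a'(\gamma)$ on that interval, and the same cubic \eqref{eq:cubic} with its discriminant and closed-form root \eqref{eq:cubicsol}. The only addition is your explicit remark that the factor $1-6\gamma$ cancels upon differentiation, which is a nice clarification but not a different argument.
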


	This proof gives an explicit algorithm to recover the parameters of a homoscedastic mixture of two Gaussians from the cumulants up to order four.
	
	\begin{algorithm}[H]
		\KwData{Data coming from a homoscedastic mixture of two Gaussian distributions.}
		\KwResult{The parameters $\lambda_1,\lambda_2,\mu_1,\mu_2,\Sigma$ of the mixture.}
		\Begin{
			Estimate the mean vector $\kappa_1$\;
			Estimate the covariance matrix $\kappa_2$\;
			Estimate the principal third cumulants $\kappa_{300..0},\kappa_{030..0},\dots,\kappa_{00..03}$\;
			For one of the principal third cumulant that is nonzero, estimate the corresponding fourth cumulant: in the following, we assume that $\kappa_{300..0}\ne 0 $, so that we estimate $\kappa_{400..0}$.\;
			Compute $a = \frac{\kappa_{400..0}}{(\sqrt[3]{\kappa_{300..0}})^4}$.\;
			Compute $\gamma$ as in \eqref{eq:cubicsol}\;
			Compute the two solutions $\lambda_1,\lambda_2$ of $\lambda(1-\lambda)=\gamma$\;
			Compute $\mu_1'=f_3(\lambda_1)^{-\frac{1}{3}}(\sqrt[3]{\kappa_{300..0}},\sqrt[3]{\kappa_{030..0}},\dots,\sqrt[3]{\kappa_{00..03}})$ and $\mu_2' = \frac{\lambda_1}{\lambda_2}\mu_1'$.\;
			Compute $\mu_1=\mu_1'+\kappa_1$ and $\mu_2 = \mu_2'+\kappa_1$\;
			Compute $\Sigma = 2(\kappa_2-(\lambda_1\mu_1+\lambda_2\mu_2)^t(\lambda_1\mu_1+\lambda_2\mu_2))$\;		
		}
		\caption{Recovery of parameters for a homoscedastic mixture of two Gaussians.}
	\end{algorithm}
	
	Observe that this algorithm needs all the cumulants of order one, all the cumulants of order two, $n$ cumulants of order three, and one cumulant of order four. Hence, it needs in total $n+\frac{n(n+1)}{2}+n+1$ cumulants.
	
\begin{remark}
	We have seen in Remark \ref{rmk:d3cone} that $\operatorname{Sec}^H_2(\mathcal{G}_{n,d})$ in cumulant coordinates is a cone over $C^0_{n,2,d} \subseteq \mathbb{A}^{K,3}_{n,d}$. Up to taking the Zariski closure, the proof of Theorem \ref{theorem:theoremk2} shows that $C^0_{n,2,d}$ is the image of the map
	\begin{equation}
	F_{n,2,d}\colon V\times \mathbb{R}\setminus\{1\} \to \mathbb{A}^{K,3}_{n,d}, \qquad (L,\lambda) \mapsto f_3(\lambda)L^3 + f_4(\lambda)L^4 + f_5(\lambda)L^5 + f_6(\lambda)L^6+ \dots
	\end{equation} 
	For $\lambda$  constant we get a projected $d$-th Veronese variety of $V$. If instead $L$ is constant, then we get a rational curve given by a linear combination of $(f_3(\lambda),f_4(\lambda),\dots,f_d(\lambda))$. 
\end{remark}
	
\subsection{The univariate case $n=1$} \label{subsec:uni}
	
We use the standard notation $\sigma^2$ for the variance $\Sigma =( \sigma_{11})$ when $n=1$.
	
For $n=1$, the moment variety $\operatorname{Sec}_k^H(\mathcal{G}_{1,d})$ is never defective. The moment map 
$$M_{1,k,2k}:\Theta^H_{1,k} \to \mathbb{A}^M_{1,2k}  $$
is finite to one. In the statistics literature it is known that in the case of homoscedastic secants, one may recover mixture parameters from given moments (i.e. compute the fiber of the map above), with an algorithm closely related to the well-known Prony's method \cite{weissprony}. This procedure was introduced by Lindsay as an application of \textit{moment matrices} \cite{lindsaymommixt} and we briefly recall the algorithm here.
	
First, how does one recover the locations $\mu_i$ and weights $\lambda_i$ of the $k$ components of a Dirac mixture from $2k-1$ moments? This is known as the quadrature rule and it works as follows. Given the moment sequence $m=(m_1,m_2,\dots,m_{2k-1})$ one considers the polynomial resulting from the following $(k+1) \times (k+1)$ determinant
	
\begin{equation}
P_k(t) = \det \begin{pmatrix}
1 & m_1 & \dots & m_{k-1} & 1 \\
m_1 & m_2 & \dots & m_k & t \\
\vdots & & & \vdots & \vdots \\
m_k & m_{k+1} & \dots & m_{2k-1} & t^k \\
\end{pmatrix}.
\end{equation}
	
The $k$ roots $\mu_1, \mu_2, \dots, \mu_k$ of $P_k(t)$ are precisely the sought locations. This follows since the equations of the secant varieties of the rational normal curve are classically known to be given by the minors of the moment matrices. For a modern reference see \cite{landsbergottaviani}. 
	
Once the locations are known, the weights $\lambda_i$ are found by solving the $k \times k$ Vandermonde linear system
	
\begin{equation}
\begin{pmatrix}
1 & 1 & \dots & 1 \\
\mu_1 & \mu_2 & \dots & \mu_k \\
\vdots & & \vdots &  \\
\mu_1^{k-1} & \mu_2^{k-1} & \dots & \mu_k^{k-1} \\
\end{pmatrix} \begin{pmatrix}
\lambda_1 \\
\lambda_2 \\
\vdots \\
\lambda_k
\end{pmatrix}= \begin{pmatrix}
1 \\
m_1 \\
\vdots \\
m_{k-1}
\end{pmatrix}.
\end{equation}
	
Back to the Gaussian case, if we knew the value of the common variance $
\sigma^2$, we can reduce to the above instance. In terms of the Gaussian moment generating function:
	
\begin{equation}
e^{-\frac12 \sigma^2 u^2} M_X(u) = e^{\mu u}. 
\end{equation}
	
Hence, the Dirac moments $\tilde{m}$ on the right hand side are linear combinations of the Gaussian moments $m$. 
Explicitly, for $1 \leq j \leq 2k-1$
\begin{equation} \label{eq:transf}
\tilde{m}_j(\sigma) = \sum_{i=0}^{\lfloor j/2 \rfloor} \frac{j!}{(-2)^i i! (j-2i)!} m_{j-2i} \sigma^{2i}.
\end{equation}
Applying the quadrature rule to the vector $\tilde{m}=(\tilde{m}_1,\tilde{m}_2, \dots, \tilde{m}_{2k-1})$ would allow us to obtain the means $\mu_1, \mu_2, \dots, \mu_k$. 
	
However, $\sigma$ is unknown. To find an estimate for $\sigma$ we consider the first $2k$ moments $m = (m_1, m_2, \dots, m_{2k})$. If $\tilde{m}=(\tilde{m}_1,\tilde{m}_2, \dots, \tilde{m}_{2k})$ comes from a mixture of $k$ Dirac measures, then
\begin{equation} \label{eq:momdet}
D_k = \det \begin{pmatrix}
1 & \tilde{m}_1 & \dots & \tilde{m}_{k-1} & \tilde{m}_k \\
\tilde{m}_1 & \tilde{m}_2 & \dots & \tilde{m}_k & \tilde{m}_{k+1} \\
\vdots & & & \vdots & \vdots \\
\tilde{m}_k & \tilde{m}_{k+1} & \dots & \tilde{m}_{2k-1} & \tilde{m}_{2k} \\
\end{pmatrix} = 0.
\end{equation}
One thus treats $\sigma$ as a variable and substitutes expressions \eqref{eq:transf} into \eqref{eq:momdet}. This results in a polynomial $D_k(\sigma)$ of degree $\binom{k+1}{2}$ in $\sigma^2$ and the estimator $\hat{\sigma}^2$ is obtained as its smallest non-negative root \cite[Theorem 5B]{lindsaymommixt}. So the algebraic degree for estimating $\sigma^2$ is $\binom{k+1}{2}$.
With $\sigma^2$ specified, one proceeds as above. 
	
More generally, the discussion under \eqref{eq:momentsZ+B} shows that the moment variety $\operatorname{Sec}_k^H(\mathcal{G}_{1,d})$ with $k\leq d/2$ is a union
$$\operatorname{Sec}_k^H(\mathcal{G}_{1,d})=\bigcup_{\sigma} \operatorname{Sec}_k(V_{1,d}^{\sigma}), $$
where $V_{1,d}^{\sigma}$ is the translation of the moment curve $V_{1,d}$ by the variance $\sigma^2$ as defined by the Gaussian moments. The secant variety $\operatorname{Sec}_k(V_{1,d}^{\sigma})$ is defined for each $\sigma$ by the $(k+1)\times (k+1)$ minors of
	
\begin{equation} \label{eq:momhank}
M_{k,d} = \begin{pmatrix}
1 & \tilde{m}_1 & \dots & \tilde{m}_{d-k-1} & \tilde{m}_{d-k} \\
\tilde{m}_1 & \tilde{m}_2 & \dots & \tilde{m}_{d-k} & \tilde{m}_{d-k+1} \\
\vdots & & & \vdots & \vdots \\
\tilde{m}_k & \tilde{m}_{k+1} & \dots & \tilde{m}_{d-1} & \tilde{m}_{d} \\
\end{pmatrix}.
\end{equation}
As soon as the $k$-th secant variety of a smooth curve is not linear, the curve can be recovered as the singular locus of highest multiplicity in the secant variety. Therefore, since curves $V_{1,d}^{\sigma}$ are distinct, their $k$-th secant varieties are distinct as well, as long as the latter are not linear. In particular, since the variety  $\operatorname{Sec}_k(V_{1,d}^{\sigma})$ has dimension $2k-1$, it follows that the union $\operatorname{Sec}_k^H(\mathcal{G}_{1,d})$ has dimension $2k$.  Given the moments $m_i$ up to degree $d$ of a point on a homoscedastic $k$-secant, the $(k+1)\times (k+1)$ minors of $M_{k,d}$ are polynomials in $\sigma^2$ with a zero at the common variance. Given the variance, the means can be inferred as above.
	
When $d=2k+1$, then the variety  $\operatorname{Sec}_k^H(\mathcal{G}_{1,d})\subset \mathbb{A}^M_{1,2k+1}$ is a hypersurface, defined by the resultant of $(k+1)$-minors of $M_{k,d}$,  the polynomial obtained by elimination of $\sigma^2$ in the ideal defined by the 
$(k+1)\times (k+1)$ minors.  Denote this polynomial by $P_{2k+1}.$  It is a polynomial in $m_1,...,m_{2k+1}$ (or $\kappa_3,\kappa_4,\dots,\kappa_{2k+1}$). For example, $$P_3=\kappa_3 = 2m_1^3-3m_1m_2+m_3,$$
$$P_5= 108\kappa_3^6 - 32\kappa_3^2\kappa_4^3 + 36\kappa_3^3\kappa_4\kappa_5 - \kappa_4^2\kappa_5^2 + \kappa_3\kappa_5^3$$

\begin{proposition}  The polynomial $P_{2k+1}$ is homogeneous  of total degree 
$$\binom{k+2}{2}\binom{k+1}{2}$$
in the multigraded weights $\deg m_i=\deg \kappa_i = i$.
\end{proposition}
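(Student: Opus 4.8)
The plan is to use weighted homogeneity to reduce the statement to computing a single number, the weight of $P_{2k+1}$, and then to extract that number from a resultant in $\sigma^2$. First I would fix the grading $\deg m_i=i$ and $\deg\sigma=1$, so $\deg\sigma^2=2$. By the transformation rule \eqref{eq:transf} every summand of $\tilde m_j$ is a scalar multiple of $m_{j-2i}\sigma^{2i}$, of weight $(j-2i)+2i=j$, so each $\tilde m_j$ is isobaric of weight $j$. Hence every maximal minor of the matrix in \eqref{eq:momhank} is isobaric: if $\Delta_c$ denotes the $(k+1)\times(k+1)$ minor obtained by deleting the column indexed by $c\in\{0,\dots,k+1\}$, then a term of that determinant is a product $\prod_r\tilde m_{r+\pi(r)}$ of weight $\sum_r(r+\pi(r))=\tfrac{k(k+1)}2+\big(\tfrac{(k+1)(k+2)}2-c\big)=(k+1)^2-c$, independent of the permutation $\pi$, so $\Delta_c$ is isobaric of weight $(k+1)^2-c$. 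Because eliminating $\sigma^2$ from a weighted-homogeneous ideal yields a weighted-homogeneous generator, $P_{2k+1}$ is isobaric, and the whole task is to find its weight (which is the total degree in the statement).

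Next I would realize this weight through a resultant in $t:=\sigma^2$. Take $f:=\Delta_{k+1}$, the determinant of the left Hankel block on columns $0,\dots,k$ — this is exactly the determinant $D_k$ of \eqref{eq:momdet} — and $g:=\Delta_k$, the minor in which the last column of that block is replaced by the extra column. The polynomial $f$ has $t$-degree $\binom{k+1}2$, and its leading coefficient in $t$ has weight $k(k+1)-2\binom{k+1}2=0$, hence is a constant; this constant is nonzero, since at $m=0$ the block is the Hankel moment matrix of a single centered Gaussian of variance $-t$, which is nonsingular for generic $t$. For such an $f$ one has $\operatorname{Res}_t(f,g)=(\text{nonzero constant})\cdot\prod_{f(t_i)=0}g(t_i)$, an isobaric polynomial of weight $\deg_t(f)\cdot\operatorname{wt}(g)=\binom{k+1}2\,(k^2+k+1)$, using $\operatorname{wt}(g)=(k+1)^2-k$. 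Moreover $\operatorname{Res}_t(f,g)\in\langle f,g\rangle\cap\mathbb R[m]\subseteq(P_{2k+1})$, so $P_{2k+1}$ divides this resultant.

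It remains to identify the complementary factor, and here I would argue by induction on $k$. A common root $t^\ast$ of $f$ and $g$ either witnesses a genuine rank drop of the full matrix to $\le k$ — the locus $\{P_{2k+1}=0\}$ — or else already the shared columns $0,\dots,k-1$ have rank $\le k-1$ at $t^\ast$, which forces both $f$ and $g$ to vanish; this condition on the $(k+1)\times k$ Hankel block in $\tilde m_0,\dots,\tilde m_{2k-1}$, after eliminating $t$, is exactly the hypersurface $\{P_{2k-1}=0\}$ produced by the same construction one step down. A short case analysis rules out any other degeneration, so set-theoretically the resultant vanishes precisely on $\{P_{2k+1}=0\}\cup\{P_{2k-1}=0\}$. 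Granting multiplicity one on each component, $\operatorname{Res}_t(f,g)=c\,P_{2k-1}P_{2k+1}$, and comparing weights with the inductive value $\operatorname{wt}(P_{2k-1})=\binom{k}2\binom{k+1}2$ gives
\begin{equation*}
\operatorname{wt}(P_{2k+1})=\binom{k+1}2\,(k^2+k+1)-\binom{k}2\binom{k+1}2=\binom{k+1}2\binom{k+2}2,
\end{equation*}
since $(k^2+k+1)-\binom{k}2=\binom{k+2}2$; the base case is $P_3=\kappa_3$ of weight $3=\binom32\binom22$.

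The main obstacle is precisely the scheme-theoretic bookkeeping just invoked: proving that $\operatorname{Res}_t(f,g)$ has no components beyond the two named loci and that each occurs with multiplicity exactly one. I would settle this by a local analysis at a general point of each locus, verifying that $f$ and $g$ meet transversally in $t$ there so that the resultant vanishes to first order; once multiplicity one is established on both components, the exact weight identity above leaves no room for any further factor, closing the induction. A minor preliminary point is that $f$ and $g$ share no common factor in $\mathbb R[m][t]$, so that the resultant is not identically zero, which one sees by exhibiting a single $m$ at which they have no common root in $t$.
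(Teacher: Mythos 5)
Your route is genuinely different from the paper's. The paper homogenizes the matrix \eqref{eq:momhank} to a map of vector bundles over a projective closure of $\mathbb{A}^M_{1,2k+1}\times\mathbb{A}^1$, reads off the class of the rank-$k$ locus from the Thom--Porteous formula, $c_2(\tilde F-\tilde E)=2\binom{k+2}{2}\binom{k+1}{2}$, and halves it because the projection forgetting $\sigma$ is $2{:}1$; you instead do elementary resultant and weight bookkeeping plus an induction on $k$. Most of your steps are correct and verifiable: each $\tilde m_j$ is isobaric of weight $j$, each maximal minor $\Delta_c$ is isobaric of weight $(k+1)^2-c$, the leading $t$-coefficient of $f=\Delta_{k+1}=D_k$ is the nonzero constant $\pm\prod_{i=1}^{k}i!$ (your specialization $m=0$, giving Gaussian moments of variance $-t$, does prove this), the weight of $\operatorname{Res}_t(f,g)$ is $\binom{k+1}{2}(k^2+k+1)$, the column-rank case analysis giving $V(\operatorname{Res}_t(f,g))=\{P_{2k+1}=0\}\cup\{P_{2k-1}=0\}$ set-theoretically is valid (if the shared columns have full rank $k$ at a common root, both extra columns lie in their span, forcing rank $\le k$), and the final arithmetic, including the base case $P_3$, closes the induction.

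The genuine gap is exactly where you flag it, and it is not a routine loose end: the multiplicities. Unconditionally your argument only yields $\operatorname{Res}_t(f,g)=c\,P_{2k+1}^{a}P_{2k-1}^{b}$ with integers $a,b\ge 1$, hence (given the induction hypothesis) only the upper bound $\operatorname{wt}(P_{2k+1})\le\binom{k+1}{2}\binom{k+2}{2}$. The equality $a=b=1$ cannot be recovered from the weight identity, since $\operatorname{wt}(P_{2k+1})$ is precisely the unknown; it must come from the deferred local analysis, and that analysis is delicate, especially on the lower component. For instance, at a general point of $\{P_{2k+1}=0\}$ one can use the direction $m_{2k+1}$, which does not occur in $f$, and note $\partial g/\partial m_{2k+1}=\pm D_{k-1}$, nonzero there because $D_{k-1}$ evaluated at the true variance is the Hankel determinant of a $k$-atomic measure; but at a general point of $\{P_{2k-1}=0\}$ this same derivative vanishes (all $k\times k$ minors of the shared block vanish there), so the obvious transversal direction fails and a finer argument is needed. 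You also need that over a general point of each component there is a unique common root $t^\ast$ and that it is a simple root of $f$ — for the upper component this is essentially the generic uniqueness of the variance that the paper invokes as the ``$2{:}1$'' step. So the skeleton is sound, and if completed it would give a more elementary proof than the paper's intersection-theoretic one; but as written the decisive multiplicity-one step is asserted with a plan rather than proven.
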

\begin{proof}  Let $$\mathbb{A}=\mathbb{A}^M_{1,2k+1}\times \mathbb{A}^1$$ where $\sigma$ is the last coordinate, and consider the projective closure $\mathbb{P}$ of $\mathbb{A}$.  Then the matrix (\ref{eq:momhank}) defines a map between vector bundles $E$ and $F$ on $\mathbb{A}$. The vector bundles $E$ and $F$  and the map extends to $\mathbb{P}$;   $E$ extends to a sum of line bundles  $\tilde E={\cal O}_\mathbb{P}\oplus {\cal O}_\mathbb{P}(-1)\oplus ...,{\cal O}_\mathbb{P}(-k)$, while F extends to a sum of line bundles $\tilde F={\cal O}_\mathbb{P}\oplus {\cal O}_\mathbb{P}(1)\oplus ...,{\cal O}_\mathbb{P}(k+1)$.  By the Thom-Porteous formula, \cite[Theorem 14.4]{Fulton},  the degree in $\mathbb{P}$ of the rank $k$ locus of the map is given by the Chern class $$c_2(\tilde F-\tilde E)= 2\binom{k+2}{2}\binom{k+1}{2} $$
since the Chern polynomials of $\tilde E$ and $\tilde F$ in are $$c(\tilde E)=(1-t)(1-2t)...(1-kt)$$
and $$c(\tilde F)=(1+t)(1+2t)...(1+(k+1)t).$$
This rank $k$ locus has codimension $2$ and its intersection with $\mathbb{A}$ is projected to the hypersurface defined by $P_{2k+1}$ in $\mathbb{A}^M_{1,2k+1}$.  The coordinate $\sigma$ appears only in even degree in the equations defining the rank $k$ locus, so the projection to $\mathbb{A}^M_{1,2k+1}$ is $2:1$, so the degree of $P_{2k+1}$ is half the degree of the rank $k$ locus.
\end{proof}

\begin{question} It would be interesting to understand better the structure of the polynomials $P_{2k+1}$, e.g. is there a closed form expression for all $k$? 
\end{question} 

If $P_{2k+1}$ vanishes on a the set $(m_1,...,m_{2k+1})$ of moments, and 
$P_{2l+1}$ does not vanish on $(m_1,...,m_{2l+1})$ for any $l<k$, then the moments lie on a homoscedastic $k$-secant but not on any $l$  secant for $l<k$. Therefore the polynomials $P_{2k+1}$ may be used to estimate the number of components in a homoscedastic Gaussian mixture (compare to the rank test proposed in \cite[Section 3.1]{lindsaymommixt} for the known variance case).
\section{Conclusion}
We have completely classified all defective cases for the moment varieties associated to homoscedastic Gaussian mixtures whenever $k<n+1$, $d=3$, $k=2$ or $n=1$. The question concerning a complete classification for all $n,d,k$ remains open, although our computations did not reveal any further defective examples. 

Our identifiability results also cover special structures in the covariance matrix, by Remark \ref{remark:estimationcumulants}. For example, a common mixture submodel involves isotropic Gaussians, which means that the covariance matrix is a scalar multiple of the identity, $\Sigma = \sigma I$. The $k$-means algorithm used in clustering can be interpreted as parameter estimation for a homoscedastic isotropic mixture of Gaussians. In \cite{hsu2013learning}, Hsu and Kakade consider the learning of mixtures of isotropic Gaussians from the moments up to order $d=3$ when $k \leq n+1$. They prove identifiability for the homoscedastic isotropic submodel (see \cite[Theorem 3.2]{anandkumar2014tensor}), and in order to solve the moment equations, they find orthogonal decompositions of the second and third order moment tensors. 

On the other hand, in \cite{lindsay1993multivariate} Lindsay and Basak proposed a `fast consistent' method of moments for homoscedastic Gaussian mixtures in the multivariate case, based on a `primary axis' to which the one-dimensional case presented in Section \ref{subsec:uni} is applied. This means that the method uses some moments of order $2k$. Knowing that in some cases there are explicit equations for secants of higher dimensional Veronese varieties \cite{landsbergottaviani}, an alternative method with minimal order based on these should be possible.

Finally, a similar approach can be made to study moment varieties of homoscedastic mixtures of other location families. In the case of Example \ref{ex:Laplace}, we saw that Gaussian moments and Laplacian moments coincide up to $d=3$. This means that Theorem \ref{thm:degree3} applies verbatim to homoscedastic mixtures of Laplace distributions. 
\bigskip \bigskip 

\noindent
{\bf Acknowledgments.} The authors are grateful to the Max Planck Institute for Mathematics in the Sciences, Leipzig and the Institute for Computational and Experimental Research in Mathematics in Providence, RI for facilitating discussions about this work. Carlos Am\'endola was partially supported by the Deutsche Forschungsgemeinschaft (DFG) in the context of the Emmy Noether junior research group KR 4512/1-1. 
We thank anonymous referees for suggestions to improve the presentation.

\bigskip \bigskip

\noindent
\footnotesize {\bf Authors' addresses:} \hfill (corresponding Tel: +49 89 289 17436 , Fax: +49 89 289 17435)

\smallskip 

\noindent
Humboldt University, Unter den Linden 6, 10099 Berlin, Germany,
\hfill {\tt daniele.agostini@math.hu-berlin.de}

\noindent \Letter Technical University of Munich, Boltzmannstra\ss e 3, 85748 Garching, Germany,
\hfill {\tt carlos.amendola@tum.de}
\noindent University of Oslo, Postboks 1053 Blindern, 0316 Oslo, Norway, \hfill  {\tt ranestad@math.uio.no}

\end{document}